\numberwithin{equation}{section}
\newtheorem{theorem}{Theorem}[section]
\newtheorem{lemma}[theorem]{Lemma}
\newtheorem{proposition}[theorem]{Proposition}
\newtheorem{corollary}[theorem]{Corollary}
\newenvironment{proof}{\noindent {\bf Proof} }{\endprf\par}
\def \endprf{\hfill {\vrule height6pt width6pt depth0pt}\medskip}
\definecolor{editcolor}{rgb}{1,0.87,0.87}
\makeatletter\newenvironment{editboxx}[1][.95]
{\begin{lrbox}{\@tempboxa}\begin{minipage}{#1\linewidth}\vspace{1ex}}
{\vspace{0ex}\end{minipage}\end{lrbox}\vspace{1ex}\fcolorbox{black}{editcolor}{\usebox{\@tempboxa}}\vspace{1ex}}
\newcommand{\R}{\mathbb{R}}
\newcommand{\C}{\mathbb{C}}
\renewcommand{\>}{\rangle}
\newcommand{\E}{\operatorname{E}}
\renewcommand{\P}{\operatorname{P}}
\newcommand{\SP}{\Gamma}
\newcommand{\iunit}{\mathrm{j}}
\newcommand{\conv}{\star}	
\newcommand{\Ns}{p}			
\newcommand{\Nr}{q}			
\newcommand{\lin}{{\mathrm{lin}}}
\begin{document}


\title{Sparse Channel Separation using Random Probes}
\author{Justin Romberg and Ramesh Neelamani\thanks{J.\ R.\ is with the
    School of Electrical and Computer Engineering at Georgia Tech in
    Atlanta, Georgia; email: jrom@ece.gatech.edu.  R.\ N.\ is with
    ExxonMobil Exploration Company in Houston, Texas; email: ramesh.neelamani@exxonmobil.com.  This work was supported by ONR Young Investigator Award N00014-08-1-0884.}}

\date{\today}

\maketitle 

\begin{abstract}
	
This paper considers the problem of estimating the channel response
	(or Green's function) between multiple source-receiver pairs.
	Typically, the channel responses are estimated one-at-a-time:
	a single source sends out a known probe signal, the receiver
	measures the probe signal convolved with the channel response,
	and the responses are recovered using deconvolution.  In this
	paper, we show that if the channel responses are sparse and
	the probe signals are random, then we can significantly reduce
	the total amount of time required to probe the channels by
	activating all of the sources simultaneously.  With all
	sources activated simultaneously, the receiver measures a
	superposition of all the channel responses convolved with the
	respective probe signals. Separating this cumulative response
	into individual channel responses can be posed as a linear
	inverse problem.

We show that channel response separation is possible (and stable) even
	when the probing signals are relatively short in spite of the
	corresponding linear system of equations becoming severely
	underdetermined. We derive a theoretical lower bound on the
	length of the source signals that guarantees that this
	separation is possible with high probability.  The bound is
	derived by putting the problem in the context of finding a
	sparse solution to an underdetermined system of equations, and
	then using mathematical tools from the theory of compressive
	sensing. Finally, we discuss some practical applications of
	these results, which include forward modeling for seismic
	imaging, channel equalization in multiple-input
	multiple-output communication, and increasing the
	field-of-view in an imaging system by using coded apertures.

\end{abstract}

\section{Introduction}

This paper gives a theoretical treatment to the problem of channel
estimation in multiple-input multiple-output (MIMO) systems.  The
general scenario is illustrated in Figure~\ref{fig:mimo}: A set of
$\Ns$ {\em sources} emit different probe signals, which then travel
through different channels and are observed by $\Nr$ {\em receivers}.
We will assume that the channel between each source/receiver pair is
linear and time-invariant; if source $i$ sends probe signal $\phi_i$,
then receiver $j$ observes the convolution $\phi_i\conv h_{i,j}$ of
the probe signal with the corresponding channel response $h_{i,j}$.
The goal is to estimate all of the channel responses, and to do so
using the smallest total amount of probing time.


We will focus on the discrete version of this problem.  We assume that each channel response $h_{i,j}$ has length $n$.  If a single source $i$ emits a probe sequence $\{\phi_i(1),\phi_i(2),\ldots,\phi_i(m)\}$ of length $m$, receiver $j$ observes\footnote{We have take $m\geq n$ here, which does not affect the discussion too much at this point, but will be important later on.} the linear convolution
\begin{equation}
	\label{eq:linconv}
	y^\lin_{i,j} = \phi_i\star h_{i,j} = 
	\begin{bmatrix}
		\phi_i(1) & 0 & \hdots & 0\\
		\phi_i(2) & \phi_i(1) & \hdots & 0 \\
		\vdots & & & \\
		\phi_i(n) & \phi_i(n-1) & \hdots & \phi_i(1) \\
		\vdots & & & \\
		\phi_i(m) & \phi_i(m-1) & \hdots & \phi_i(m-n+1) \\
		0 & \phi_i(m) & \hdots & \phi_i(m-n+2) \\
		\vdots & & & \\
		0 & 0 & \hdots & \phi_i(m)
	\end{bmatrix}
	\begin{bmatrix}
		h_{i,j}(1) \\ h_{i,j}(2) \\ \vdots \\ h_{i,j}(n)
	\end{bmatrix} 
	=: \Phi^\lin_i h_{i,j}.
\end{equation}
Recovering $h_{i,j}$ from $y^\lin_{i,j}$ is a classical deconvolution problem.  The inverse problem can be made very well conditioned if $\phi_i$ is chosen carefully; if not, then the inversion can be regularized using some type of prior information about the channel.
  
We will measure the cost of the channel estimation by the amount of
time we spend probing the channel, which we can see is proportional to
$n+m-1 = O(n+m)$, the number of rows in the linear system in
\eqref{eq:linconv}.  From a single source, we can estimate the
response to all of the receivers by emitting a single probing sequence
and solving \eqref{eq:linconv} at each receiver $j$.  If there are
multiple sources, then typically the sources are activated
one-at-a-time. In this case, the total activation time required to
estimate all of the $h_{i,j}$ becomes $O(\Ns(n+m))$.  In theory, $m$
could be made as small as we like in this situation, giving us a lower
bound on the cost of $O(n\Ns)$.

In this paper, we propose and rigorously analyze an alternative
strategy for estimating the channels between each source-receiver
pair.  Our strategy will reduce the total amount of time spent on
probing the channels by activating all of the sources
simultaneously. (This approach was first proposed in the context 
of seismic imaging in a related conference paper \cite{neelamani08ef}.)  
Now, of course, the sources will
interfere with one another, and the receiver will observe a
combination of each source convolved with its respective channel.
With all $\Ns$ sources active, the observations at receiver $j$ can be
written as the following system of equations
\begin{equation}
	\label{eq:multilinconv}
	y^\lin_j = \sum_{i=1}^\Ns y^\lin_{i,j} = 
	\begin{bmatrix}
		\Phi^\lin_1 & \Phi^\lin_2 & \cdots & \Phi^\lin_\Ns 
	\end{bmatrix}
	\begin{bmatrix}
		h_{1,j} \\ h_{2,j} \\ \vdots \\ h_{\Ns,j}
	\end{bmatrix}
	=: \Phi^\lin h_j.
\end{equation}
The $\Phi^\lin$ in \eqref{eq:multilinconv} is the $(m+n-1)\times n\Ns$
matrix formed by concatenating the source convolution matrices
$\Phi^\lin_i$ row-wise; the $h_j$ is the unknown $n\Ns$-vector
consisting of the $\Ns$ channel responses for the path between each
source and receiver $j$.  With all of the sources activated
simultaneously, the total cost of the acquisition is $O(n+m)$, but now
the channel responses are interfering with one another.  The question
now is how long (quantified by $m$) the probing sequences must be to
reliably ``untangle'' the individual $h_{i,j}$ from $\Phi^\lin h_j$.
If the probing sequences are chosen carefully and in concert with one
another, the system in \eqref{eq:multilinconv} will be invertible for
$m\approx n\Ns$, again making the total activation time $O(n\Ns)$.  If
we are interested in recovering all possible channels without making
any assumptions about their structure, we of course cannot have
$m<np$.

We will show that if the combined channel response $h_j$ is {\em
sparse}, then the probing sequences can be significantly shorter than
$np$ if they are {\em random}.  This problem, along with the tools we
will deploy to solve it, is closely related to recent work in the
field of {\em compressive sensing} (CS).  The theory of CS basically
states that vectors $x_0$ with $s$ non-zero components can be
recovered from an underdetermined set of linear measurements $y=\Phi
x_0$ if the the matrix $\Phi$ is sufficiently diverse (the precise
technical conditions are reviewed in detail in
Section~\ref{sec:sparse-recovery}).  The essential contribution of
this paper is to show that when the sequences $\{\phi_i(t)\}_t$
consist of independent and identically distributed Gaussian random
variables, the matrix $\Phi^\lin$ in \eqref{eq:multilinconv} meets
this criterion for pulse lengths $m$ that are within a
poly-logarithmic factor of the sparsity $s$.  In particular,
Theorems~\ref{th:Emain} and \ref{th:Pmain} combined with
Proposition~\ref{prop:stable-recovery} shows that if the total number
of significant components in $h_j$ is $s$, then it can be recovered
from $y^\lin$ for
\[
	m ~\lesssim~ s\cdot\log^5(n\Ns), 
\]
reducing the total time the sources are activated to $O(n +
s\log^5(n\Ns))$.  When the channels are sparse, that is $s \ll n\Ns$, then
the cost of acquiring all of the channels is not much
more than acquiring a single channel independently.
While having the sources activated simultaneously introduces
``cross-talk'' between the different channels, the use of different
random codes by each source coupled with the sparse structure of the
channels allows us to separate the cross-talk into its constituent
components.

In the remainder of this section, we will discuss some applications of
the channel separation problem and review recent related work.
Section~\ref{sec:sparse-recovery} provides an overview of sparse
recovery from underdetermined linear measurements.
Section~\ref{sec:separation-theorem} carefully states our main
theorems, which provide a sufficient lower bound on the length of the
probing signals (in relation to the number of sources, the length of
the channels, and their sparsity) that allows us to robustly recover
$h_j$ from $y^\lin$ using any number of sparse recovery algorithms.
Proof of these theorems is given in Sections~\ref{sec:proofmean} and
\ref{sec:prooftail}.  The proofs rely heavily on estimates for random
sums of rank-1 matrices, which are overviewed in the Appendix.

\begin{figure}
	\centerline{
	\begin{tabular}{ccc}
		\includegraphics[height=1.6in]{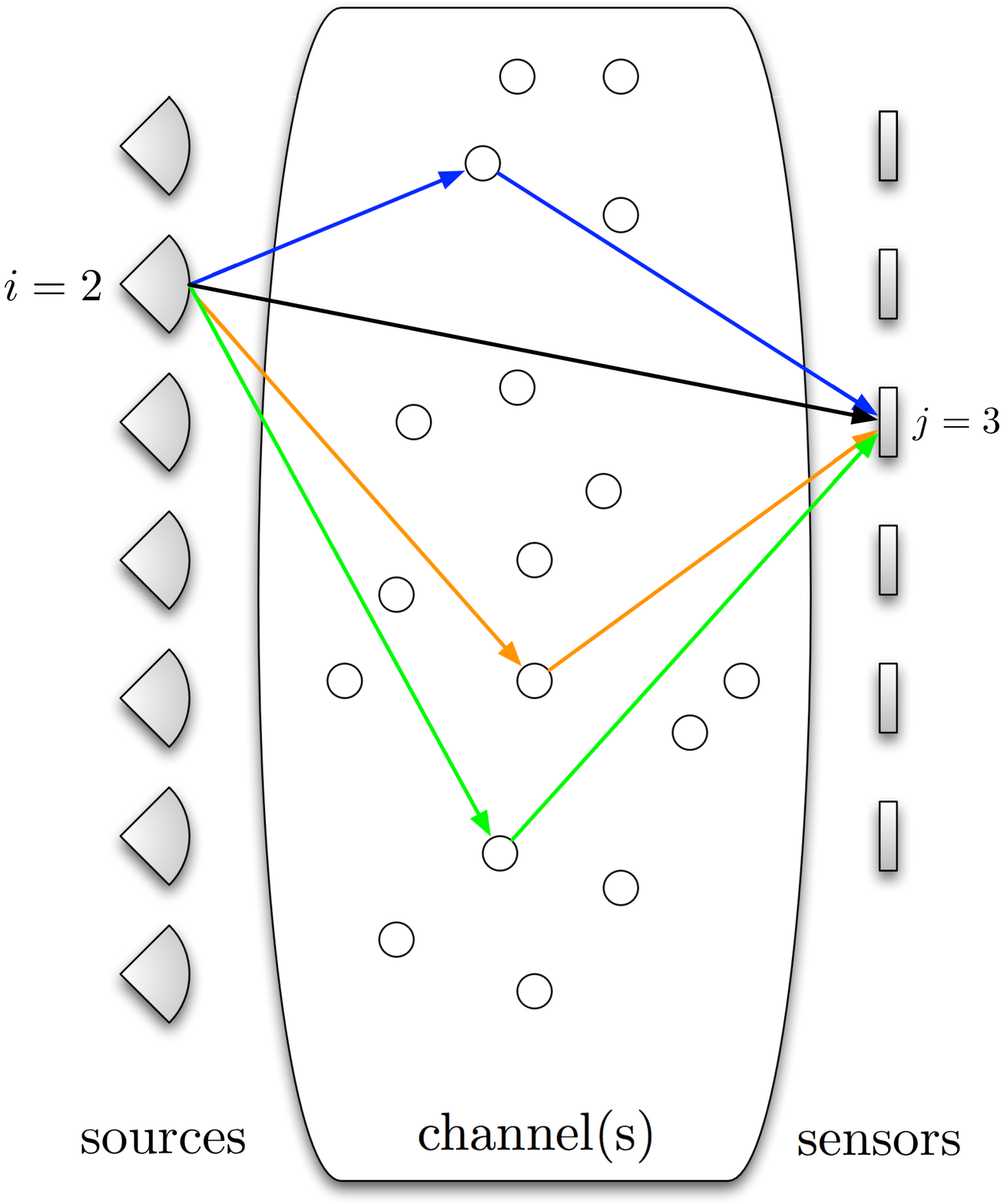} & \hspace{.5in} &
		\includegraphics[height=1.2in]{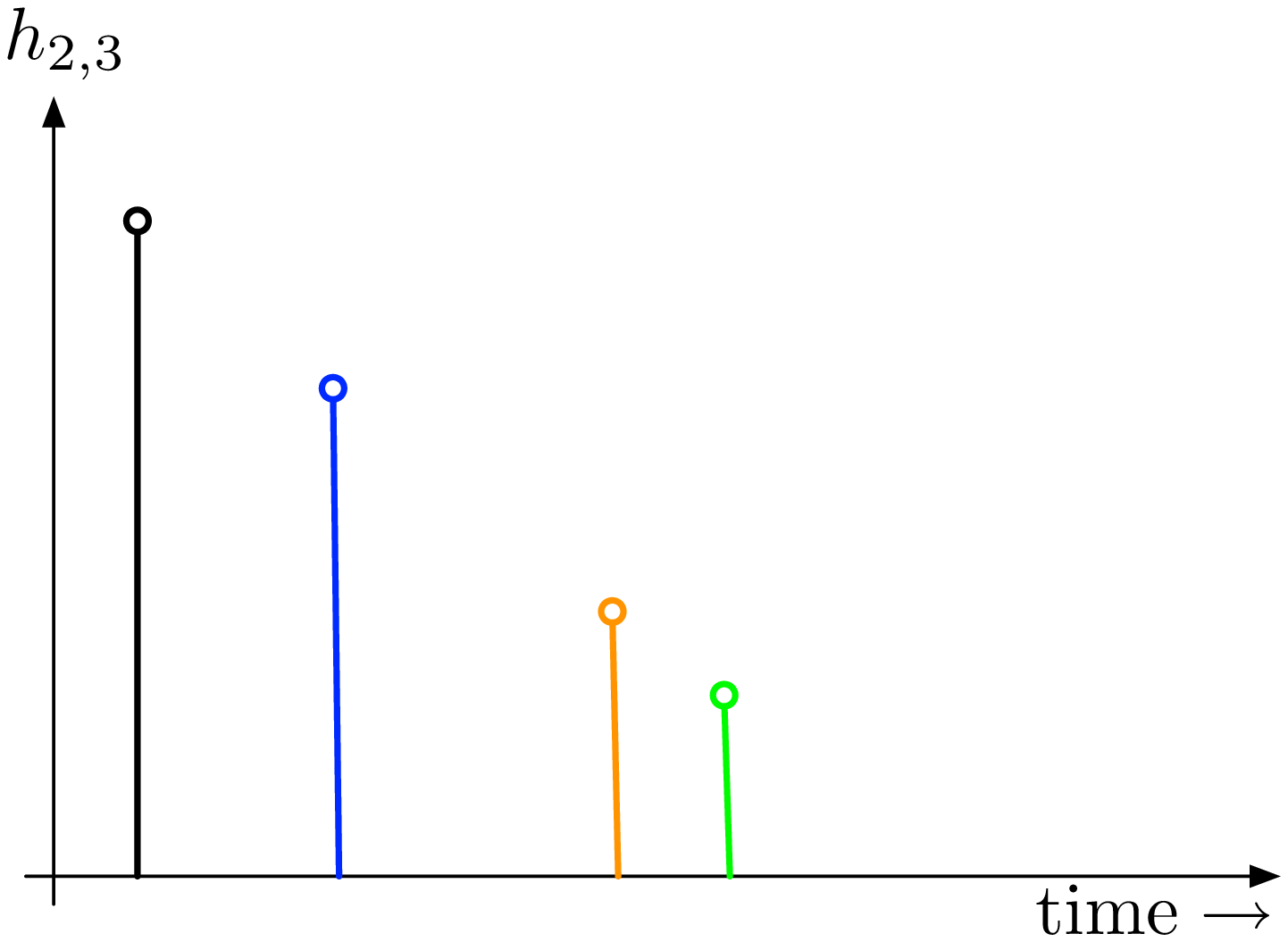} \\
		(a) & & (b)
	\end{tabular}
	}
	\caption{\small\sl (a) Multiple sources on
	left, multiple receivers on the right. The black arrow denotes
	the direct arrival from source $2$ to receiver $3$. The colored arrows denote
	the indirect arrivals caused by the different reflectors
	(denoted by small circles).   
	(b) Channel response $h_{2,3}$ between source $2$ and receiver $3$.}
	\label{fig:mimo}
\end{figure}

\subsection{Applications}

For further motivation, we discuss three specific scenarios in which this multichannel separation problem arises.

{\bf Seismic exploration and forward modeling.} Subsurface images of
the earth are formed by activating different points on the surface
with acoustic sources, then measuring the response at a number of
receivers locations.  From these recorded responses, a 3D subsurface
model of the earth (consisting of the local velocity of the
propagating elastic waves) can be reconstructed using what is known as
full waveform inversion (FWI).  Dense samplings for the positions of
the acoustic sources lead to higher resolution reconstructions, but
also longer field acquisitions and more computationally intensive
inversion.
	
The theoretical results in this paper suggest that these expenses can
mitigated by activating the sources simultaneously using different
random patterns.  In the field, this reduces the amount of time
required for the acquisition.  Although the sources will interfere
with one another, the individual responses can be separated afterwards
by taking advantage of the sparsity of each of the channel
responses\footnote{Sparse models are common in seismic imaging
\cite{claerbout:826,scales1990}.  In practice, additional gains are
realized by going beyond the setting treated in this paper and
modeling the channels jointly, viewing them as cross sections of a
larger 3D image (with an associated sparse transform) rather than as
individual sparse channels; see \cite{herrmann:GJI}, for example.}.
The source waveforms will have to be longer than if each of the $\Ns$
sources were activated individually, but the net activation time
across all sources is much smaller than $\Ns$ individual channel
probes.
	
Sparse channel separation can also reduce the amount of computation
required for the inversion.  The most expensive step in wavefront
inversion is testing a candidate model to see how well it fits the
measurements that have been collected.  This so-called {\em forward
modeling} simulation consists of solving an extremely large PDE.  The
cost of this simulation is proportional to the length of the source
signals (i.e.\ the number of time steps required), but does not depend
at all on the number of sources that are active at one time ---
running a simulation with a single source active costs takes just as
long per time step as with many sources active.  If we simulate each
of the $\Ns$ sources individually, we will need to run each simulation
for $O(n)$ time for a total cost of $O(n\Ns)$ time steps (and the cost
of each time step can be extremely high).  If we simulate the sources
simultaneously, then the number of time steps in the simulation can be
$O(n + s\log^5 (n\Ns))$.  Given the results of the simulation with
simultaneous active sources, we will of course have to recover the
individual channel responses using some type of sparse recovery
algorithm (solving the optimization program in \eqref{eq:l1qc} below,
for example).  But the computations required for this recovery are
minor in comparison to the forward modeling simulation, especially
given recent progress in optimization algorithms
\cite{figueiredo07gr,hale08fi,yin08br,becker09ne} and the fact that
the system $\Phi^\lin$ can be applied quickly using FFTs.
	
	Source separation for seismic exploration is explored in further detail in the companion publications \cite{neelamani08ef,neelamani09ef}.

{\bf Channel estimation in MIMO communications.}  When information is transmitted wirelessly, it is often the case that reflections cause there to be multiple paths from the source to the receiver.  Instead of the transmitted waveform, the receiver observes a convolution of this waveform with a channel response --- if the number of reflectors is small, this response is sparse.  To compensate for this multipath effect, the channel is periodically estimated by having the source emit a known probing sequence that the receiver can subsequently deconvolve.  If there are multiple transmitters and multiple receivers, we can save time by probing all of the channel pairs simultaneously, and separating the individual responses using sparse recovery.  This approach is particularly useful when the channel is changing rapidly, a common problem in underwater acoustic communications \cite{catipovic90pe}.

{\bf Coded aperture imaging.}  In \cite{marcia08fa,marcia08su,marcia09co}, an imaging architecture is introduced to increase the field-of-view (FOV) of a camera using {\em coded apertures}.  A coded aperture is a series of small openings (apertures) whose net effect is to convolve a target image with a sequence (code) determined by the pattern in which these openings appear.  Coded apertures offer a way around the classical trade-off between aperture size and image brightness; the multiple apertures overlay many copies of the image at slightly different shifts, making the image incident on the detector array ``bright'' and easily recovered (via deconvolution) if the aperture code is chosen carefully (e.g.\ the MURA patterns in \cite{gottesman89ne}).
		
	The essential idea from \cite{marcia09co} to increase the FOV without sacrificing resolution is illustrated in Figure~\ref{fig:fovca}: the image is broken into $\Ns$ subimages, each of which we would like to recover to a resolution of $n$ pixels.  Rather than measuring each subimage directly, which would require a detector array of size $m=n\Ns$, we pass each image through its own coded aperture of size $m$ and these coded subimages are combined onto a detector array of size $m$.  The task at hand, then, is to recover the full $n\Ns$ pixel image from these $m$ measurements.  
	
	This problem also conforms to our multichannel framework\footnote{Passing an image through a coded aperture has the effect of convolving it with a binary code.  The theoretical results presented in this paper require the code to be Gaussian; this requirement was imposed so that each convolution could be diagonalized in the Fourier domain, which allows us to apply recent results from the theory of random matrices to prove Theorems~\ref{th:Emain} and \ref{th:Pmain}.  In practice, we would expect there to be little difference between then Gaussian and binary cases.} --- in this case, we have $\Ns$ sources and one receiver.  Here, the known ``probe signals'' are the coded aperture patterns and the unknown channels $h_{i,1}$ are the different subimages.  The main results of this paper say that if the entire image is approximately $s$ sparse, than the size $m$ of the detector array needs only to be only on the order of $s$ (within a log factor) rather than the full resolution $n\Ns$.  If the images we are reconstructing are consecutive frames in a video sequence, the image sparsity can come from looking at the differences between consecutive frames.

\begin{figure}
	\centering
	\includegraphics[height=1.5in]{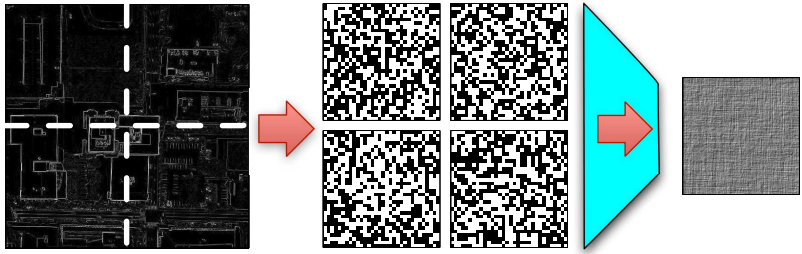}
	\caption{\small\sl Sketch of the architecture proposed in \cite{marcia09co} for increasing the field of view of a camera.  The image is broken into $p$ subimages, each subimage passes through a different coded aperture, and the coded subimages are combined on the detector array.  The effect of the coded aperture is to convolve the subimage with an associated code, and so \eqref{eq:multilinconv} models the process mathematically.}
	\label{fig:fovca}
\end{figure}

\subsection{Relationship to previous work}

We have cast the multichannel separation problem as recovering a
sparse vector from an underdetermined, random system of equations.
This general problem has been studied extensively over the past five
years under the name of compressive sensing (CS).  The essential
results from this field state that if we observe $y=\Phi x_0$, where
$x_0$ is $s$-sparse and $\Phi$ is an $m\times N$ matrix that obeys a
technical condition called the {\em restricted isometry property} (see
\eqref{eq:uup} below), then $x_0$ can be recovered from $y$ even when
$m \ll N$, and this recovery is stable in the presence of measurement
noise, and robust against modeling error (i.e.\ it is effective even
when $x_0$ is not perfectly sparse)
\cite{candes06ne,candes06st,donoho06co}.  Random matrices that obey
this property for $m$ within a logarithmic factor of $s$ include
matrices with independent entries \cite{candes06ne,baraniuk08si},
matrices that have been subsampled from orthobases consisting of
vectors whose energy is almost evenly distributed between their
entries \cite{rudelson08sp}, as well as other matrices with more
structured randomness \cite{tropp10be,romberg09co}.  These measurement
systems provide efficient encodings for $x_0$ because the number of
measurements we need to make is roughly proportional to the number
active elements.  The results from
Section~\ref{sec:separation-theorem} tell us that matrices formed by
concatenating a series of $p$ random convolutions are another such
efficient encoding (with $N=np$).

Early work on signal processing algorithms using sparse models for channel estimation can be found in \cite{cotter02sp} and \cite{fuchs99mu}.  In \cite{haupt08to}, estimation of a single channel using a pulse consisting of a sequence of independent Gaussian random variables is explored; the mathematical results of \cite{haupt08to} are framed in the language of CS, and the key recovery condition (the restricted isometry property in \eqref{eq:uup} below) is established for pulse lengths of $O(n+s^2\log n)$.  The paper \cite{rauhut09ci} show that the recovery conditions can be improved to $O(n + s\log^3 n)$ when the observations are noiseless and the channel is exactly $s$-sparse.  Using convolution with a random pulse to perform compressive sensing was also considered in the context of imaging in \cite{romberg09co} and as a way to handle streaming data in \cite{tropp06fi}.  Results for super-resolved radar imaging using ideas from CS can be found in \cite{herman09hi}.  In this paper, the undetermined system arises not because we are subsampling a signal after it has been convolved with a pulse, but by combining the convolutions from multiple channels into one observed sequence.

Multichannel separation also bears some resemblance to the problem of finding the sparsest decomposition in a union of bases \cite{donoho01un,gribonval03sp,candes06qu,tropp08ra}; this resemblance becomes even more pronounced when we recast the problem using circular convolution (see Section~\ref{sec:lintocirc}) and take $m=n$.  We can think of each convolution matrix as a different basis, and search for a way to write the measurements as a superposition of a small number of vectors chosen from these bases.  In contrast to previous work on this problem, the bases here are random and not quite orthogonal (the related paper \cite{romberg09mu} considers an alternative way to generate the random pulses so that each of the convolution matrices is exactly orthogonal).

\section{Sparse recovery from underdetermined measurements}
\label{sec:sparse-recovery}

In the previous section, we set up multiple channel estimation as a linear inverse problem.  Classically, these types of problems are solved using least-squares; the stability of the solution is almost completely characterized through the eigenvalues of $\Phi^*\Phi$.  If for all $x\in\R^{np}$ we have
\begin{equation}
	\label{eq:framebounds}
	(1-\delta)\|x\|^2_2 ~\leq~ \|\Phi x\|^2_2 ~\leq~ (1+\delta)\|x\|^2_2
\end{equation}
for some small $0\leq\delta < 1$, then recovering $x$ from $\Phi x$ is well-posed and stable in the presence of noise.  Of course, $m < np$, then the system will be underdetermined, $\Phi$ has a nonzero nullspace, and the lower-bound in \eqref{eq:framebounds} cannot hold.  It appears that to simultaneously estimate all of the channel responses, the length $m$ of the probe sequence must exceed $np$.  

Recent results from compressive sensing have told us that if the vector we are trying to recover is sparse, then a much weaker condition on $\Phi$ is sufficient for well-posed, stable recovery.  In particular, if \eqref{eq:framebounds} holds for all $2s$-sparse vectors $x$, rather than all $x\in\R^{np}$, then we will be able to recover $h_j$ from $r_j=\Phi h_j$ about as well as if we had observed the $s$ largest (most important) entries in $h_j$ directly.   

We can make this precise in the following manner.  Denote by $B^\SP_2$ the set of all vectors $x\in\R^{np}$ that are nonzero only on the set $\SP\subset\{1,\ldots,np\}$ and have unit $\ell_2$ norm.  For a square matrix $A$, we define the $\|\cdot\|_s$ norm as 
\begin{equation}
	\label{eq:snorm}
	\|A\|_s = \sup_{\substack{|\SP|\leq s\\ x,y\in B^\SP_2}} |y^*Ax|,
\end{equation}
where the supremum is taken over all $s$-sparse vectors with unit energy.  (We use $*$ for the transpose of a real-valued vector or matrix, or conjugate-transpose for a complex-valued vector or matrix.)  It is easy to see that if
\[
	\|I-\Phi^*\Phi\|_s \leq \delta_s
\]
then
\begin{equation}
	\label{eq:uup}
	(1-\delta_s)\|x\|^2_2~\leq~\|\Phi x\|^2_2~\leq~(1+\delta_s)\|x\|^2_2
	\quad\text{for all $s$-sparse $x$}.
\end{equation}

Establishing \eqref{eq:uup}, which has gone by the names {\em uniform uncertainty principle} and {\em restricted isometry property} in the CS literature, is the key for stable sparse recovery \cite{candes06ne,candes06st,candes07da,needell09co,blumensath08it}.  
The following proposition gives us a concrete algorithm for recovering a sparse vector from measurements made by a matrix that satisfies \eqref{eq:uup}.
\begin{proposition}[\cite{candes06st}]
	\label{prop:stable-recovery}
	Let $x_0$ be an $s$-sparse vector, and $\Phi$ be a matrix that obeys \eqref{eq:uup} with
	$\delta_{C_1s}\leq C_2$, where $C_1\geq 1$ and $0<C_2<1$ are constants.  Given noisy observations 
	$y = \Phi x_0 + e$, where $e$ is an error vector with norm at most $\|e\|_2\leq\epsilon$, the solution
	$\tilde{x_0}$ to the optimization program
	\begin{equation}
		\label{eq:l1qc}
		\min_x~\|x\|_1 \quad\text{subject to}\quad \|\Phi x - y\|_2\leq\epsilon
	\end{equation}
	will obey 
	\[
		\|\tilde{x}_0 - x_0\|_2 ~\leq~ C_3\epsilon,
	\]
	where $C_3$ is a known universal constant.  In addition, if $x_0$ is a general non-sparse vector, 
	then the solution to \eqref{eq:l1qc} obeys
	\[
		\|\tilde{x}_0 - x_0\|_2 ~\leq~ C_4\left(\epsilon + s^{-1/2}\|x_0 - x_{0,s}\|_1 \right),
	\]
	where $x_{0,s}$ is the best $s$-sparse approximation to $x_0$; the nonzero components in $x_{0,s}$ 
	are the $s$ largest components of $x_0$.
\end{proposition}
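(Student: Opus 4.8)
The plan is to follow the now-standard ``cone-plus-tube'' argument for $\ell_1$-recovery under the restricted isometry property; since \eqref{eq:l1qc} is exactly the quadratically-constrained $\ell_1$ program treated in \cite{candes06st}, I will only sketch the steps rather than chase constants. Write $h = \tilde{x}_0 - x_0$ for the recovery error, let $T_0$ be the support of the best $s$-term approximation $x_{0,s}$, and for concreteness take $C_1 = 2$ (a larger $C_1$ only improves the final constants). The argument controls $\|h\|_2$ through two ingredients: a \emph{cone constraint} coming from the $\ell_1$-minimality of $\tilde{x}_0$, and a \emph{tube constraint} coming from the fact that both $x_0$ and $\tilde{x}_0$ are feasible for \eqref{eq:l1qc}.

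First I would record these two constraints. Since $\|\Phi x_0 - y\|_2 = \|e\|_2 \le \epsilon$, the vector $x_0$ itself is feasible, so $\|\tilde{x}_0\|_1 \le \|x_0\|_1$ and, by the triangle inequality, $\|\Phi h\|_2 \le \|\Phi\tilde{x}_0 - y\|_2 + \|\Phi x_0 - y\|_2 \le 2\epsilon$. Splitting the inequality $\|x_0 + h\|_1 \le \|x_0\|_1$ over $T_0$ and $T_0^c$ and using the reverse triangle inequality yields the cone bound $\|h_{T_0^c}\|_1 \le \|h_{T_0}\|_1 + 2\|x_0 - x_{0,s}\|_1$. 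Next comes the block decomposition, which is the core of the proof: order the coordinates of $h$ on $T_0^c$ by decreasing magnitude and partition them into consecutive blocks $T_1, T_2, \ldots$ of size $s$, and set $T_{01} = T_0 \cup T_1$. A standard ``shift'' estimate (each entry of $T_j$ is at most the average magnitude on $T_{j-1}$) gives $\sum_{j\ge 2}\|h_{T_j}\|_2 \le s^{-1/2}\|h_{T_0^c}\|_1$, and combining this with the cone bound and $\|h_{T_0}\|_1 \le s^{1/2}\|h_{T_0}\|_2 \le s^{1/2}\|h_{T_{01}}\|_2$ produces $\sum_{j\ge 2}\|h_{T_j}\|_2 \le \|h_{T_{01}}\|_2 + 2 s^{-1/2}\|x_0 - x_{0,s}\|_1$.

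Now I would bring in the RIP \eqref{eq:uup} in two forms: directly, as $(1-\delta_{2s})\|h_{T_{01}}\|_2^2 \le \|\Phi h_{T_{01}}\|_2^2$, and through its bilinear consequence $|\langle \Phi u, \Phi v\rangle| \le \delta_{2s}\|u\|_2\|v\|_2$ for $u,v$ with disjoint supports of total size at most $2s$ (which follows by applying \eqref{eq:uup} to $\|\Phi(u\pm v)\|_2^2$ and subtracting). Writing $\|\Phi h_{T_{01}}\|_2^2 = \langle \Phi h_{T_{01}}, \Phi h\rangle - \sum_{j\ge2}\langle \Phi h_{T_{01}}, \Phi h_{T_j}\rangle$, I bound the first term by $\sqrt{1+\delta_{2s}}\,\|h_{T_{01}}\|_2\cdot 2\epsilon$ via Cauchy--Schwarz and the tube constraint, and each cross term by splitting $h_{T_{01}} = h_{T_0} + h_{T_1}$ and applying the bilinear estimate to the two disjointly-supported pairs, giving $\le \sqrt{2}\,\delta_{2s}\|h_{T_{01}}\|_2\|h_{T_j}\|_2$. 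Summing over $j$ and substituting the block estimate from the previous paragraph yields a self-referential inequality in $\|h_{T_{01}}\|_2$ of the form $(1-\delta_{2s})\|h_{T_{01}}\|_2 \le 2\sqrt{1+\delta_{2s}}\,\epsilon + \sqrt2\,\delta_{2s}\|h_{T_{01}}\|_2 + 2\sqrt2\,\delta_{2s}\, s^{-1/2}\|x_0 - x_{0,s}\|_1$.

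The only place the hypothesis is genuinely consumed is in solving this last inequality: the coefficient $1 - (1+\sqrt2)\delta_{2s}$ must be positive, i.e.\ $\delta_{2s} < \sqrt2 - 1$, which is precisely a sufficient condition of the stated form $\delta_{C_1 s}\le C_2$ (with $C_1 = 2$ and any $C_2 < \sqrt2 - 1$; the original \cite{candes06st} uses a slightly different combination of $\delta_{2s}, \delta_{3s}$, but the shape $\delta_{C_1 s}\le C_2$ is what matters). This gives $\|h_{T_{01}}\|_2 \le C_3'\epsilon + C_4' s^{-1/2}\|x_0 - x_{0,s}\|_1$, and then $\|h\|_2 \le \|h_{T_{01}}\|_2 + \sum_{j\ge2}\|h_{T_j}\|_2 \le 2\|h_{T_{01}}\|_2 + 2 s^{-1/2}\|x_0 - x_{0,s}\|_1$ yields the general bound with $C_4$ depending only on $C_2$; specializing to an exactly $s$-sparse $x_0$ kills the $\|x_0 - x_{0,s}\|_1$ terms and leaves $\|\tilde{x}_0 - x_0\|_2 \le C_3\epsilon$. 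I expect no conceptual obstacle here beyond the block decomposition and the bilinear RIP estimate; the one thing to be careful about is the bookkeeping that keeps every RIP invocation at level $2s$ (rather than $3s$) by always splitting $h_{T_{01}}$ into its two $s$-sparse pieces before pairing with an $h_{T_j}$, so that the condition on $\delta$ matches the hypothesis as stated.
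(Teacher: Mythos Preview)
The paper does not prove this proposition; it is quoted as a known result with a citation to \cite{candes06st}, and the sentence immediately following the statement even points to the refined constants $C_1=2$, $C_2\le\sqrt{2}-1$ from \cite{candes08re}. There is therefore no in-paper proof to compare against. Your sketch is precisely the canonical cone-plus-tube argument of \cite{candes08re} (feasibility gives the tube $\|\Phi h\|_2\le 2\epsilon$, $\ell_1$-minimality gives the cone bound on $\|h_{T_0^c}\|_1$, block decomposition plus the bilinear RIP estimate closes the loop), and it is correct as written; the care you take to split $h_{T_{01}}$ into two $s$-sparse pieces so that every RIP invocation stays at level $2s$ is exactly what produces the $\delta_{2s}<\sqrt{2}-1$ condition the paper quotes.
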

The constants in the theorem above are known to be small.  For example, in \cite{candes08re} it is shown that we need $C_1=2$ and $C_2\leq\sqrt{2}-1$, and with $C_2=1/4$, we have $C_4\leq 6$.
Similar stability results hold for recovery procedures other than $\ell_1$ minimziation.  In particular, in \cite{needell09co} and \cite{blumensath08it}, it is shown that particular types of iterative thresholding algorithms can achieve essentially the same performance after a very reasonable number of iterations.  

The main result of this paper, codified in Theorems~\ref{th:Emain} and \ref{th:Pmain} below, is that the $\Phi$ which arises in the multichannel separation problem will obey the restricted isometry property \eqref{eq:uup} for $s$ almost proportional (within a $\log$ factor) to $m$.

\section{A multichannel separation theorem}
\label{sec:separation-theorem}

\subsection{From linear to circular convolution}
\label{sec:lintocirc}

Rather than analyze the spectral properties of $\Phi^\lin$ in \eqref{eq:multilinconv} directly, we will replace it with a slightly modified version whose components are submatrices of large circular matrices, and thus can be diagonalized in the Fourier domain, which simplifies the analysis considerably.  To do this, we simply ``pre-process'' the measurements by adding some of them together to create a slightly shorter observation vector.

To start, consider the single source measurements $y^\lin_{i,j}$ in equation \eqref{eq:linconv}, with the pulse length $m$ exceeding the length of the channel $n$.  Suppose that we add the first $n-1$ measurements to the last $n-1$ measurements to form
\begin{align*}
	y_{i,j} &= \begin{bmatrix} 
		y^\lin_{i,j}(n) \\ \vdots \\ y^\lin_{i,j}(m) \\ 
		y^\lin_{i,j}(m+1) + y^\lin_{i,j}(1) \\ \vdots \\
		y^\lin_{i,j}(m+n-1) + y^\lin_{i,j}(n-1)
	\end{bmatrix} \\[4mm]
	& = \begin{bmatrix}
		\phi_i(n) & \phi_i(n-1) & \hdots & \phi_i(1) \\
		\vdots & & & \\
		\phi_i(m) & \phi_i(m-1) & \hdots & \phi_i(m-n+1) \\
		\phi_i(1) & \phi_i(m) & \hdots & \phi_i(m-n+2) \\
		\vdots & & & \\
		\phi_i(n-1) & \phi_i(n-2) & \hdots & \phi_i(m)
	\end{bmatrix}
	\begin{bmatrix}
		h_{i,j}(1) \\ h_{i,j}(2) \\ \vdots \\ h_{i,j}(n)
	\end{bmatrix} \\[4mm] 
	& =: \Phi_i h_{i,j}
\end{align*}
The matrix $\Phi_i$ consists of the first $n$ columns of an $m\times
m$ circulant matrix with 
\[
r_i^* = [\phi_i(n)~\cdots~\phi_i(1)~\phi_i(m)~\cdots~\phi_i(n+1)],
\]
as the first row.  As such, we can use the discrete Fourier transform
to diagonalize $\Phi_i$.  Let $F$ be the $m\times m$ normalized
discrete Fourier matrix with entries
\[
	F(\omega,t) = \frac{1}{\sqrt{m}}e^{-\iunit 2\pi(\omega-1)(t-1)/m},
\]
and let $F_{(1:n)}$ denote the $m\times n$ matrix consisting of the first $n$ columns of $F$.  Then 
\begin{equation}
	\label{eq:Phiif}
	\Phi_i = F^*G_iF_{(1:n)}, \quad\text{with}\quad G_i=\operatorname{diag}(\{g_i(\omega)\}_{\omega=1}^m).
\end{equation}
The vector $g_i(\omega)$ is the (re-normalized) Fourier transform of $r_i$:
\[
	g_i=m\cdot Fr_i .
\]

When all the sources are active simultaneously, we can perform the same manipulations on the composite linear system \eqref{eq:multilinconv}, combining the first $n-1$ entries in $y^\lin_j$ with the last $n-1$ to yield
\begin{equation}
	\label{eq:yPhih}
	y_j = 
	\begin{bmatrix}
		\Phi_1 & \Phi_2 & \cdots & \Phi_\Ns
	\end{bmatrix}
	\begin{bmatrix}
		h_{1,j} \\ h_{2,j} \\ \vdots \\ h_{\Ns,j}
	\end{bmatrix}
	=: \Phi h_j.
\end{equation}
As in \eqref{eq:Phiif}, we can write $\Phi$ as
\begin{equation}
	\label{eq:phimulti}
	\Phi = F^*\begin{bmatrix}G_1F_{(1:n)} & G_2F_{(1:n)} & \cdots & G_\Ns F_{(1:n)}  \end{bmatrix}.
\end{equation}

We assume that each source emits an independent random waveform.  That is, we take the probe samples $\{\phi_i(t)\}_{i,t}$ to be iid Gaussian random variables with zero mean and variance $m^{-1}$ (so each probing waveform $\phi_i$ has unit energy in expectation).  Since the $\phi_i(t)$ are iid Gaussian, the corresponding Fourier transforms $g_i(\omega)$ are sequences of conjugate symmetric complex-valued Gaussian random variables:
\[
	g_i(\omega)\sim\begin{cases} \mathrm{Normal}(0,1) & \omega = 0,m/2+1 \\
	\mathrm{Normal}(0,1/\sqrt{2}) + \iunit\cdot\mathrm{Normal}(0,1/\sqrt{2}) & \omega=2,\ldots,m/2
	\end{cases}
\]
and $g_i(\omega) = g_i(n-\omega+2)^*$ for $\omega=n/2+2,\ldots,n$.

\subsection{Recovery theorems}

Our main theorem shows that the random matrix $\Phi$, generated from the random sequences $\{g_i(\omega)\}$ as in \eqref{eq:yPhih}, is an approximate restricted isometry in expectation for $m\gtrsim s\log^5(n\Ns)$.

\begin{theorem}
	\label{th:Emain}
	Let $\Phi$ be as in \eqref{eq:phimulti}.  There exists constants $C_5$ and $C_6$ such that
	\begin{equation}
		\label{eq:Eratio}
		\E\|I-\Phi^*\Phi\|_s ~\leq~ \sqrt{\frac{C_5\cdot s\cdot \log^2 s\log^2(m\Ns) \log(n\Ns)}{m}},
	\end{equation}
	when
	\[
		m\geq C_6\cdot s\cdot\log^2s\log^2(m\Ns)\log(n\Ns).
	\]
\end{theorem}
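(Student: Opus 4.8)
The plan is to follow the standard symmetrization-plus-chaining route for establishing restricted isometry estimates of structured random matrices, adapted to the block-circulant structure of $\Phi$. First I would put $\|I-\Phi^*\Phi\|_s$ in a usable form. Because $\Phi^*\Phi$ is Hermitian, the definition \eqref{eq:snorm} collapses to $\|I-\Phi^*\Phi\|_s = \sup_{x\in D_s}\big|\,\|\Phi x\|_2^2-1\,\big|$, where $D_s$ is the set of $s$-sparse unit vectors; so the target is the expected restricted isometry constant. From \eqref{eq:phimulti}, $\Phi^*\Phi=D^*D$ with $D=[\,G_1F_{(1:n)}\ \cdots\ G_\Ns F_{(1:n)}\,]$; writing $\phi_\omega\in\C^n$ for the (conjugated) $\omega$-th row of $F_{(1:n)}$ and $\mathbf g(\omega)=(g_1(\omega),\dots,g_\Ns(\omega))$, the $\omega$-th row of $D$ is $d_\omega^*$ with $d_\omega=\overline{\mathbf g(\omega)}\otimes\phi_\omega\in\C^{n\Ns}$, so that
\[
	\Phi^*\Phi=\sum_{\omega=1}^{m}d_\omega d_\omega^{*}
\]
is a sum of rank-one matrices. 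Using $\E|g_i(\omega)|^2=1$, $\E\,[\overline{g_i(\omega)}g_{i'}(\omega)]=0$ for $i\ne i'$, and $\sum_\omega\phi_\omega\phi_\omega^*=F_{(1:n)}^*F_{(1:n)}=I_n$, one checks $\E\,\Phi^*\Phi=I_{n\Ns}$. The conjugate-symmetry relations $g_i(\omega)=\overline{g_i(m-\omega+2)}$ make the $d_\omega$ dependent; I would deal with this at the start by keeping only the $\lfloor m/2\rfloor+1$ frequencies on which the $\mathbf g(\omega)$ are independent and noting that the remaining rank-one terms are entrywise conjugates of the kept ones, which only affects universal constants.

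Next, with $E:=\E\,\|I-\Phi^*\Phi\|_s$, the standard symmetrization inequality gives $E\le 2\,\E_{\mathbf g}\,\E_{\epsilon}\,\big\|\sum_\omega\epsilon_\omega\,d_\omega d_\omega^{*}\big\|_s$ for independent Rademacher $\epsilon_\omega$. Conditioned on $\mathbf g$, $x\mapsto\sum_\omega\epsilon_\omega|d_\omega^{*}x|^2$ is a sub-Gaussian process on $D_s$, and a Dudley/chaining estimate (this is exactly the restricted Rudelson-type bound for Rademacher sums of rank-one matrices collected in the Appendix) yields
\[
	\E_{\epsilon}\,\big\|\textstyle\sum_\omega\epsilon_\omega\,d_\omega d_\omega^{*}\big\|_s \ \lesssim\ \mathcal{E}\cdot\big\|\Phi^*\Phi\big\|_s^{1/2},
\]
where $\|\Phi^*\Phi\|_s\le 1+\|I-\Phi^*\Phi\|_s$ and $\mathcal E$ is an entropy-type quantity of the shape $\mathcal E\lesssim R\cdot(\text{polylogarithmic entropy factor})$, with $R:=\max_\omega\sup_{x\in D_s}|d_\omega^{*}x|$ the largest restricted row-norm of $D$. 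The polylog factor is produced by a two-scale covering of $D_s$: a volumetric bound $\log N(D_s,\|\cdot\|_2,u)\lesssim s\log(en\Ns/s)+s\log(1/u)$ (a union over the $\binom{n\Ns}{s}$ supports) that governs the coarse scales and contributes the factor $\log(n\Ns)$, and a Maurey/empirical-method bound $\log N\lesssim s^2 M^2\log(n\Ns)/u^2$, with $M^2:=\max_{k,\ell}\sum_\omega|(d_\omega)_k|^2|(d_\omega)_\ell|^2$, that governs the fine scales; integrating the minimum of the two and iterating the chaining step once produces the $\log^2 s$.

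It remains to bound the random quantities $R$ and $M$. The rows of $D$ have a special structure --- within a fixed source $i$ all $n$ coordinates of $d_\omega$ have the same modulus $|g_i(\omega)|/\sqrt m$ --- so (for $s\le n$) the $s$ largest-magnitude coordinates of $d_\omega$ lie in a single source, giving $R^2=\tfrac{s}{m}\,\sup_\omega\max_i|g_i(\omega)|^2$ and, likewise, $M^2\lesssim\tfrac1m\sup_\omega\max_i|g_i(\omega)|^2$; a Gaussian-maximum union bound over the $m\Ns$ coefficients then gives $R^2\lesssim\tfrac{s\log(m\Ns)}{m}$ and $M^2\lesssim\tfrac{\log(m\Ns)}{m}$ off an event of probability $\ll m^{-1}$ (for $s>n$ one uses a mild variant of the counting). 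Since $\|I-\Phi^*\Phi\|_s$ has at most polynomially growing moments, the exceptional event contributes negligibly, so these bounds feed into $\E\mathcal E^2\lesssim\tfrac{s\,\log^2 s\,\log^2(m\Ns)\,\log(n\Ns)}{m}$ (one $\log(m\Ns)$ from $R$, one more from the Maurey scale $M$). Combining with the symmetrized chaining bound and Cauchy--Schwarz,
\[
	E\ \le\ 2\,\E_{\mathbf g}\big[\mathcal E\,(1+\|I-\Phi^*\Phi\|_s)^{1/2}\big]\ \le\ 2\sqrt{\E\mathcal E^2}\,\sqrt{1+E},
\]
and under the hypothesis $m\ge C_6\,s\log^2 s\log^2(m\Ns)\log(n\Ns)$ the coefficient $2\sqrt{\E\mathcal E^2}$ can be made $\le\tfrac12$ (choosing $C_6$ large), forcing $E\le1$ and hence $E\le 4\sqrt{\E\mathcal E^2}\le\sqrt{C_5\,s\log^2 s\log^2(m\Ns)\log(n\Ns)/m}$, which is \eqref{eq:Eratio}.

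The main obstacle is the chaining estimate of the third display --- i.e.\ showing that the covering of the $s$-sparse unit sphere in the data-dependent geometry costs only polylogarithmic factors rather than a power of $s$, which is exactly the refinement (in the spirit of Rudelson--Vershynin and Rauhut) that distinguishes the $s\cdot\mathrm{polylog}$ regime from the easier $s^2\cdot\mathrm{polylog}$ one. This needs the two-scale (volumetric plus Maurey) covering together with the iteration that yields the $\log^2 s$, and it relies crucially on the row structure of $D$ so that $R$ and $M$ cost only $\log(m\Ns)$. Carefully interleaving the two expectations $\E_{\mathbf g}$ and $\E_\epsilon$, and passing from the high-probability bounds on $R,M$ to honest expectation bounds, is the remaining bookkeeping burden; the rank-one-sum deviation estimates gathered in the Appendix are the tool built for precisely this.
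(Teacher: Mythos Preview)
Your approach is correct and is in fact simpler than the paper's, though both ultimately rest on the same Rudelson--Vershynin rank-one lemma (Lemma~A.1) and the same recursion-closing step (Lemma~B.1). The difference is in how the Rademacher sum is produced. You observe that $\Phi^*\Phi=\sum_\omega d_\omega d_\omega^*$ is already a sum of rank-one terms that are independent across frequencies $\omega$ (modulo the conjugate-symmetry pairing, which you handle correctly), so a single symmetrization over $\omega$ followed by one application of Lemma~A.1 suffices. The paper instead expands $\Phi^*\Phi$ by source pairs as $\sum_{j,k,\omega}g_k(\omega)^*g_j(\omega)f_{k,\omega}f_{j,\omega}^*$ and splits $Z=H_1+H_2$ into the $j=k$ diagonal and the $j\ne k$ cross terms; because $H_2$ is bilinear in the Gaussian sequence, the paper must invoke the decoupling inequality of de~la~Pe\~na--Gin\'e to pass to independent copies $g,g'$, then split the decoupled sum again into $H_3+H_4$, and finally apply Lemma~A.1/A.2 three separate times (to $H_1$, $H_3$, $H_4$). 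In Step~E6 the paper arrives at $H_3'=\sum_\omega\epsilon_\omega u_\omega v_\omega^*$ with $u_\omega=\sum_k g_k(\omega)^*f_{k,\omega}$, which is exactly your $d_\omega$ --- so the paper reaches your starting point only after the decoupling detour. Your route buys brevity: by symmetrizing in the frequency index from the outset you never introduce the bilinear dependence that decoupling is designed to remove. The paper's finer decomposition isolates pieces that may behave differently (e.g.\ $H_1$ involves only one source at a time) and is reused verbatim in the tail-bound proof of Theorem~3.2, but for the expectation bound of Theorem~3.1 your argument is the more direct one. A minor remark: your extended description of the two-scale covering and the Maurey step is essentially a re-derivation of Lemma~A.1 itself; you could equally well black-box that lemma, in which case the passage from high-probability bounds on $R,M$ to expectation bounds is handled cleanly by a single Cauchy--Schwarz against $\E B^2\lesssim\log(m\Ns)$, exactly as the paper does in \eqref{eq:EB2}.
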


It is straightforward to turn Theorem~\ref{th:Emain} into a direct statement about the restricted isometry constants.
\begin{corollary}
	There is a constant $C_7$ such that
	\[
		\E\|I-\Phi^*\Phi\|_s ~\leq~\delta_s
	\]
	when
	\begin{equation}
		\label{eq:m-mean}
		m\geq C_7\delta_s^{-2}\cdot s\cdot\log^5(n\Ns),
	\end{equation}
	for any $0< \delta_s\leq 1$, provided that $m \leq n\Ns$.
\end{corollary}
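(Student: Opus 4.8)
The plan is to massage the estimate in Theorem~\ref{th:Emain} using the standing hypothesis $m\le n\Ns$ (together with the automatic fact that the sparsity obeys $s\le n\Ns$, since an $n\Ns$-vector cannot be sparser than that, and for $s=n\Ns$ there is nothing to prove). The only genuine wrinkle is that $m$ appears \emph{both} in the denominator of \eqref{eq:Eratio} and inside the logarithm $\log^2(m\Ns)$, so the requirement we want to enforce --- that the right-hand side of \eqref{eq:Eratio} be at most $\delta_s$ --- is implicit in $m$.

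First I would clear the implicitness. Since $m\le n\Ns$ and $\Ns\le n\Ns$, we have $\log(m\Ns)\le\log(n\Ns^2)\le 2\log(n\Ns)$, and since $s\le n\Ns$ we have $\log s\le\log(n\Ns)$ (as is customary, one reads $\log(\cdot)$ as $\max(\log(\cdot),1)$ so that these manipulations remain valid for small $n,\Ns,s$, with any edge effects absorbed into the constants). Hence the product of logarithmic factors appearing in Theorem~\ref{th:Emain} collapses into a single power:
\[
	s\cdot\log^2 s\,\log^2(m\Ns)\,\log(n\Ns) ~\le~ 4\,s\,\log^5(n\Ns).
\]
This one estimate controls both the quantity under the square root in \eqref{eq:Eratio} and the right-hand side of the side condition $m\ge C_6\,s\,\log^2 s\,\log^2(m\Ns)\,\log(n\Ns)$ required by Theorem~\ref{th:Emain}.

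Next I would pin down the constant. Set $C_7 = 4\max(C_5,C_6)$, and suppose $m\ge C_7\,\delta_s^{-2}\,s\,\log^5(n\Ns)$ with $0<\delta_s\le 1$ and $m\le n\Ns$. Because $\delta_s^{-2}\ge 1$, this already forces $m\ge 4C_6\,s\,\log^5(n\Ns)\ge C_6\,s\,\log^2 s\,\log^2(m\Ns)\,\log(n\Ns)$, so the hypothesis of Theorem~\ref{th:Emain} is met and \eqref{eq:Eratio} is in force. Substituting the collapsed log bound into \eqref{eq:Eratio} then gives
\[
	\E\|I-\Phi^*\Phi\|_s ~\le~ \sqrt{\frac{4\,C_5\,s\,\log^5(n\Ns)}{m}} ~\le~ \sqrt{\frac{4C_5}{C_7}\,\delta_s^2} ~\le~ \delta_s,
\]
the last inequality using $4C_5\le C_7$. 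This is exactly the assertion of the corollary.

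The ``hard part'' here is conceptual rather than computational: recognizing that it is precisely the a priori cap $m\le n\Ns$ that licenses replacing the implicit, $m$-dependent logarithmic factors by a clean $\log^5(n\Ns)$, and then checking that a single choice of $C_7$ simultaneously discharges the side condition of Theorem~\ref{th:Emain} and yields the target bound $\delta_s$. The $\delta_s^{-2}$ dependence is unavoidable and comes entirely from the square root in \eqref{eq:Eratio} --- demanding that a $\sqrt{\,\cdot/m\,}$ quantity fall below $\delta_s$ costs a factor $\delta_s^{-2}$ in $m$ --- and the remainder is routine bookkeeping with constants.
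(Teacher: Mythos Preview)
Your proposal is correct and follows essentially the same route as the paper: use $m\le n\Ns$ and $s\le n\Ns$ to bound $\log(m\Ns)\le 2\log(n\Ns)$ and $\log s\le\log(n\Ns)$, collapse the log factors to $4\log^5(n\Ns)$, and then read off $C_7$. You are in fact slightly more careful than the paper's terse derivation, in that you explicitly verify the side condition of Theorem~\ref{th:Emain} and give a concrete formula $C_7=4\max(C_5,C_6)$.
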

To see how \eqref{eq:m-mean} follows from \eqref{eq:Eratio}, notice that for $m\leq n\Ns$ and $s\leq n\Ns$, we have
\[
	\log(m\Ns) ~=~ \log m + \log \Ns ~\leq~2\log(n\Ns)
\]
and so
\[
	\sqrt{\frac{C_5\cdot s\cdot \log^2 s\log^2(m\Ns) \log(n\Ns)}{m}} ~\leq~ 
	\sqrt{\frac{4C_5\cdot s\cdot \log^5(n\Ns)}{m}},
\]
and we can choose $m$ as in \eqref{eq:m-mean}. 

Theorem~\ref{th:Emain} gives us a lower bound on the length of a pulse sufficient to endow, in expectation, $\Phi$ with certain restricted isometry constants.  The following theorem gives us a lower bound for the length of the pulses that guarantees that $\Phi$ has certain isometry constants with high probability. 

\begin{theorem}
	\label{th:Pmain}
	Let $\Phi$ and $\delta_s$ be as in Theorem~\ref{th:Emain}.  There exists constants $C_8$ and $C_9$
	such that 
	\[
		\P\left\{\|I-\Phi^*\Phi\|_s > \delta_s\right\} ~\leq~ C_8(n\Ns)^{-1}
	\]
	when
	\begin{equation}
		\label{eq:m-tail}
		m \geq C_9\cdot \delta_s^{-2}\cdot s\cdot\log^6(n\Ns).
	\end{equation}
\end{theorem}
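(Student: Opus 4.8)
The plan is to deduce Theorem~\ref{th:Pmain} from Theorem~\ref{th:Emain}: since the expectation of $X := \|I-\Phi^*\Phi\|_s$ is already controlled, it suffices to show that $X$ concentrates tightly about its mean. The starting point is the structural observation that, writing $\Phi = F^*\Psi$ with $\Psi = \begin{bmatrix}G_1F_{(1:n)} & \cdots & G_\Ns F_{(1:n)}\end{bmatrix}$, we have
\[
	\Phi^*\Phi = \Psi^*\Psi = \sum_{\omega=1}^m \psi_\omega\psi_\omega^*,
\]
where $\psi_\omega\in\C^{n\Ns}$ is the conjugate transpose of the $\omega$-th row of $\Psi$; its $i$-th length-$n$ block equals $g_i(\omega)$ times the $\omega$-th row of $F_{(1:n)}$, whose entries all have modulus $m^{-1/2}$. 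Because the probe samples are real, $\psi_{m-\omega+2}$ is a deterministic function of $\psi_\omega$, so grouping conjugate frequencies into $Z_\omega := \psi_\omega\psi_\omega^* + \psi_{m-\omega+2}\psi_{m-\omega+2}^*$ exhibits $\Phi^*\Phi - I$ as a sum of roughly $m/2$ independent, mean-zero, rank-at-most-$2$ random matrices (here $\E\Phi^*\Phi = I$ follows from $FF^*=I$, $\E|g_i(\omega)|^2=1$, and orthonormality of the columns of $F$). Consequently
\[
	X = \sup_{|\SP|\leq s,\ u\in B^\SP_2}\Big|\sum_{\omega}\big(g_\omega(u) - \E g_\omega(u)\big)\Big|, \qquad g_\omega(u) := u^* Z_\omega u \geq 0,
\]
is the supremum of an empirical process in the independent frequency blocks $\xi_\omega = (g_1(\omega),\dots,g_\Ns(\omega))$.

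The summands $g_\omega(u)$ are quadratic in Gaussians, hence unbounded, so the next step is truncation. On the event $E_R$ that $|g_i(\omega)|\leq R$ for all $i,\omega$, with $R\asymp\sqrt{\log(n\Ns)}$, one has $\P(E_R^c)\lesssim m\Ns\,e^{-R^2/2}\lesssim(n\Ns)^{-1}$ using $m\leq n\Ns$. On $E_R$, Cauchy--Schwarz together with the $m^{-1/2}$ bound on the DFT atoms and the $s$-sparsity of $u$ gives $g_\omega(u)\lesssim R^2 s/m =: M$, and since $\sum_\omega\E Z_\omega = I$ forces $\sum_\omega\E g_\omega(u) = \|u\|_2^2 = 1$ on $B^\SP_2$, the weak variance satisfies $\sigma^2 := \sup_u\sum_\omega\E g_\omega(u)^2 \leq M$ (using $0\le g_\omega(u)\le M$ on the truncated process). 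Feeding $M$, $\sigma^2$, and the bound $\E X\lesssim\delta_s$ from Theorem~\ref{th:Emain} and its corollary into a Talagrand-type concentration inequality for suprema of bounded empirical processes (e.g.\ the form due to Bousquet) yields
\[
	\P\{X\geq\E X + t\} ~\leq~ \P(E_R^c) + \exp\!\Big(-c\,\min\Big(\frac{t^2}{\sigma^2 + M\E X},\ \frac{t}{M}\Big)\Big).
\]
Taking $t=\delta_s/2$ and using $\E X\leq\delta_s/2$ (which holds once $m\gtrsim\delta_s^{-2}s\log^5(n\Ns)$) gives $X\leq\delta_s$ except on an event of probability $\lesssim(n\Ns)^{-1} + \exp(-c\,\delta_s^2 m/(s\log(n\Ns)))$; forcing the exponent to exceed $\log(n\Ns)$ costs one further factor of $\log(n\Ns)$ in $m$, which combined with the $\log^5$ needed for the mean produces the $\log^6(n\Ns)$ of \eqref{eq:m-tail}. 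An equivalent route that packages truncation and concentration together is to bound $\E X^q$ for $q\asymp\log(n\Ns)$ directly --- by iterating the symmetrization and Rudelson-type rank-one-sum estimates from the Appendix that already underlie Theorem~\ref{th:Emain} --- and then apply Markov's inequality.

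I expect the main obstacle to lie in the second-order bookkeeping feeding the concentration step. In the version above, verifying that the truncated process has weak variance of the right order is easy, but one must also check that replacing the $g_i(\omega)$ by their truncations perturbs $\E X$ only negligibly, so that Theorem~\ref{th:Emain} may legitimately be applied to the truncated process. In the moment-method version the obstacle is the same difficulty in a different guise: bounding $\E X^q$ requires re-running the generic-chaining argument behind Theorem~\ref{th:Emain} uniformly over the moment order $q$, and it is precisely in tracking that dependence (and in the accompanying Gaussian-to-bounded reduction) that the extra power of $\log(n\Ns)$ is spent. The remaining ingredients --- the truncation probability, the independent-matrix decomposition, and assembling the constant $C_8$ --- are routine.
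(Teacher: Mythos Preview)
Your proposal is correct and takes a genuinely different route from the paper. The paper does not treat $\|Z\|_s$ as a single empirical-process supremum; instead it reuses the decomposition $Z=H_1+H_2$ from the proof of Theorem~\ref{th:Emain}, decouples the off-diagonal double sum $H_2$ into $H_2'$, splits $H_2'=H_3+H_4$, and then handles $\|H_1\|_s,\|H_3\|_s,\|H_4\|_s$ one at a time by symmetrizing into a Rademacher sum, conditioning on the same truncation event $\{B^2\le M\}$ you call $E_R$, and applying the bounded-Rademacher concentration inequality of Lemma~\ref{lm:ltconc} to each piece. Your approach is more streamlined: a single truncation followed by one application of Talagrand--Bousquet, with no decoupling and no threefold repetition. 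The paper's approach, by contrast, stays entirely within the Rademacher toolkit already assembled in the Appendix and mirrors the mean proof step for step. The truncation bookkeeping you flag as the main obstacle is handled in the paper exactly as you would expect, via $\E_{\mathcal M}\|\cdot\|_s\le \E\|\cdot\|_s/\P\{\mathcal M\}$. One remark on your log-counting: with $\sigma^2\lesssim M\asymp s\log(n\Ns)/m$ and $\E X\lesssim\delta_s$, Bousquet's exponent is of order $\delta_s^2 m/(s\log(n\Ns))$, so the concentration step by itself only asks for $m\gtrsim\delta_s^{-2}s\log^2(n\Ns)$, which is already dominated by the $\log^5$ needed for the mean --- your argument therefore actually yields $\log^5$ rather than the $\log^6$ in \eqref{eq:m-tail}. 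The extra logarithm in the paper's bound is an artifact of feeding the full $L\log(m\Ns)$ mean estimate, rather than just the variance proxy $M$, through Lemma~\ref{lm:ltconc} three separate times.
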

It is worth mentioning that we chose a probability of failure of $\sim (np)^{-1}$ mostly out of convenience.  In fact, the probability can be made arbitrarily small by adjusting the constant $C_9$; we could achieve a failure rate of $(np)^{-\alpha}$ for any $\alpha>1$ by making the constant in \eqref{eq:m-tail} $C_9\alpha$.

The essential consequence of the next theorem is that for pulse lengths \eqref{eq:m-tail}, we can simultaneously estimate the channel responses from all sources to receiver $j$, which are concatenated in the vector $h_j$, from either concatenated circular convolution observations $\Phi h_j$ or concatenated linear convolution observations $\Phi^\lin h_j$.  As linear convolution observations are more typical, we state our channel separation corollary in terms of $\Phi^\lin$.  
\begin{corollary}
	Suppose we observe 
	\[
		y^\lin_j = \Phi^\lin h_j + e,
	\]	
	where $y^\lin, \Phi^\lin$, and $h_j$ are as in \eqref{eq:multilinconv} and $e$ is an unknown vector of measurement errors with $\|e\|_2\leq\epsilon$.  Take $C_1,C_2,$ and $C_4$ as in Proposition~\ref{prop:stable-recovery}, and take $m$ as in Theorem~\ref{th:Pmain} so that  $\delta_{C_1s}\leq C_2$, where $\delta_{C_1s}$ is the isometry constant for the concatenated circulant matrix $\Phi$ generated from $\Phi^\lin$ as in \eqref{eq:yPhih}.  Then the solution $\tilde{h}_j$ to
	\[
		\min_h~\|h\|_1 \quad\text{subject to}\quad \|\Phi^\lin h - y^\lin_j\|_2\leq\epsilon.
	\]	
	is a close approximation to $h_j$ in that
	\begin{equation}
		\label{eq:linstable}
		\|\tilde{h}_j-h_k\|_2 ~\leq~ C_4\left(\sqrt{2}\epsilon + s^{-1/2}\|h_j-h_{j,s}\|_1\right),
	\end{equation}
	where $h_{j,s}$ is the best $s$-term approximation to $h_j$.
\end{corollary}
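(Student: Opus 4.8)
The plan is to transfer the guarantee we already have for the concatenated \emph{circulant} matrix $\Phi$ to the concatenated \emph{linear}-convolution matrix $\Phi^\lin$, paying only a bounded (factor $\sqrt 2$) inflation of the noise level. The starting point is the observation from Section~\ref{sec:lintocirc} that the ``folding'' operation --- add the first $n-1$ entries of a length-$(m+n-1)$ vector to its last $n-1$ entries and keep the middle $m-n+1$ entries unchanged --- is a \emph{fixed} linear map $A:\R^{m+n-1}\to\R^m$, and that by construction $\Phi = A\Phi^\lin$ and $y_j = A y^\lin_j$. So everything reduces to controlling $A$ and then quoting Theorem~\ref{th:Pmain} together with the mechanics of Proposition~\ref{prop:stable-recovery}.

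First I would bound $\|A\|_{2\to 2}$. Every row of $A$ has at most two nonzero entries, each equal to $1$, and every column has exactly one nonzero entry, equal to $1$; in fact the rows of $A$ have pairwise disjoint supports (the middle block picks out the entries $y^\lin_j(n),\dots,y^\lin_j(m)$ and the folded block pairs index $\ell$ with index $m+\ell$ for $\ell=1,\dots,n-1$), so $AA^*$ is diagonal with entries in $\{1,2\}$ and hence $\|A\|_{2\to 2} = \sqrt{\|AA^*\|}\le \sqrt 2$. Applying $A$ to $y^\lin_j = \Phi^\lin h_j + e$ therefore gives $y_j = \Phi h_j + e'$ with $e' = Ae$ and $\|e'\|_2 \le \sqrt 2\,\|e\|_2 \le \sqrt 2\,\epsilon$.

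Second I would push the feasibility constraint through $A$: if $h$ obeys $\|\Phi^\lin h - y^\lin_j\|_2\le\epsilon$, then $\|\Phi h - y_j\|_2 = \|A(\Phi^\lin h - y^\lin_j)\|_2 \le \sqrt 2\,\epsilon$. In particular the true $h_j$ is feasible for the stated program (its residual is exactly $\|e\|_2\le\epsilon$), so a minimizer $\tilde h_j$ exists, obeys $\|\tilde h_j\|_1 \le \|h_j\|_1$, and moreover lies in the larger feasible set $\{h:\|\Phi h - y_j\|_2\le\sqrt 2\epsilon\}$. Meanwhile, by the hypothesis ``take $m$ as in Theorem~\ref{th:Pmain} so that $\delta_{C_1 s}\le C_2$'' --- i.e.\ Theorem~\ref{th:Pmain} invoked with sparsity level $C_1 s$ and isometry constant $C_2$ --- the circulant concatenation $\Phi$ satisfies \eqref{eq:uup} with $\delta_{C_1 s}\le C_2$ with probability at least $1 - C_8(n\Ns)^{-1}$.

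Finally I would conclude via the argument proving Proposition~\ref{prop:stable-recovery}. That argument uses only (a) the restricted isometry property of the measurement matrix, and (b) the fact that the returned vector is feasible (so that $\Phi(\tilde h_j - h_j)$ has norm at most twice the noise level, here $2\sqrt 2\,\epsilon$) and has $\ell_1$ norm no larger than the truth's --- both of which hold here for $\Phi$, $y_j$, noise level $\sqrt 2\epsilon$, and the point $\tilde h_j$. It delivers $\|\tilde h_j - h_j\|_2 \le C_4(\sqrt 2\,\epsilon + s^{-1/2}\|h_j - h_{j,s}\|_1)$, which is \eqref{eq:linstable}. I do not anticipate a genuine obstacle: the only points needing care are the (easy) $\sqrt 2$ operator-norm bound on the folding map, which is exactly where the $\sqrt 2$ in \eqref{eq:linstable} originates, and the observation that $\tilde h_j$ need not minimize the $\Phi$-constrained program, so one invokes the proof of Proposition~\ref{prop:stable-recovery} rather than its statement verbatim --- legitimately, since $\Phi^\lin$-feasibility implies $\Phi$-feasibility with tolerance $\sqrt 2\epsilon$ and $h_j$ is feasible for both programs.
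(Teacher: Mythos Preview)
Your proposal is correct and follows essentially the same route as the paper: introduce the folding map $A$ with $\Phi=A\Phi^\lin$, bound $\|A\|\le\sqrt{2}$, push feasibility from the $\Phi^\lin$-constrained program to the $\Phi$-constrained one at noise level $\sqrt{2}\epsilon$, and then invoke the mechanism behind Proposition~\ref{prop:stable-recovery} (rather than its statement) using only RIP, $\ell_1$-minimality, and feasibility of $\tilde h_j$. The paper makes the last step explicit by restating that mechanism as a separate proposition, but the content is the same; your justification of $\|A\|\le\sqrt{2}$ via the diagonal structure of $AA^*$ is in fact more detailed than the paper's.
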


\begin{proof}
Theorem~\ref{th:Pmain} coupled with Proposition~\ref{prop:stable-recovery} give us robust reconstruction for observations made through the concatenated circulant system $\Phi$.  To establish the Proposition, we will make a concrete connection between the solutions to the linear and circular convolution inverse problems.

First, we consider the case where there is no noise and $h_j$ is perfectly $s$-sparse.  Given the circular observations $y = \Phi h_j$, we could solve \eqref{eq:l1qc} with $\epsilon=0$, making the constraints $\Phi x = y$.   With $m$ as in \eqref{eq:m-mean}, the solution $\tilde{h}_j$ will be exactly $h_j$ with high probability.  Stated differently, there is no vector in the nullspace of $\Phi$ that can be added to $h_j$ that lowers the $\ell_1$ norm.  Since $\operatorname{Null}(\Phi^\lin)\subset\operatorname{Null}(\Phi)$, we could also solve \eqref{eq:l1qc} with $y^\lin$ and $\Phi^\lin$ in place of $y$ and $\Phi$ and recover the signal exactly.

To make the connection when there is noise,  we use the following proposition, which is contained in \cite{candes06st,candes08re}, but is slightly stronger than Proposition~\ref{prop:stable-recovery}.
\begin{proposition}
	Under the conditions of Proposition~\ref{prop:stable-recovery}, if $d$ is {\em any} vector that satisfies $\|x_0+d\|_1 \leq \|x_0\|_1$ and $\|\Phi d\|_2\leq 2\epsilon$ (both of which must be true for $d=\tilde{x}_0-x_0$), then
\[
	\|d\|_2 ~\leq~ C\cdot \left(\epsilon + s^{-1/2}\|x_0-x_{0,s}\|_1\right).
\]
\end{proposition}
Now suppose we solve \eqref{eq:l1qc} with observations $y^\lin_j$ and matrix $\Phi^\lin$, denoting the solution $\tilde{h}^\lin_j$ and set $d=\tilde{h}^\lin_j - h_j$.  Since $h_j$ is feasible, we will have both $\|h_j+d\|_1\leq \|h_j\|_1$ and $\|\Phi^\lin d\|_2\leq 2\epsilon$.  We can write $\Phi = A\Phi^\lin$, where $A$ combines the first and last $n-1$ elements of a vector.  Since the maximum singular value of $A$ is $\sqrt{2}$, we also have
\[
	\|\Phi d\|_2 = \|A\Phi^\lin d\|_2 \leq \sqrt{2}\|\Phi^\lin d\|_2 \leq 2\sqrt{2}\epsilon.
\]
Thus the solution $\tilde{h}^\lin_j$ is as accurate as solving \eqref{eq:l1qc} with the circulant observations $y_j$ and matrix $\Phi$ with $\epsilon$ increased by a factor of $\sqrt{2}$.  Thus $\tilde{h}_j$ will obey \eqref{eq:linstable}.

\end{proof}

\section{Proof of Theorem~\ref{th:Emain}}
\label{sec:proofmean}

The essential tool for establishing \eqref{eq:Eratio} is a variation
(Lemma~\ref{lm:rvnon}) of a lemma due to Rudelson and Vershynin
(Lemma~\ref{lm:rvmean}).  Most of our efforts will go towards
manipulating $I-\Phi^*\Phi$ to put it in a form where we can apply
Lemma~\ref{lm:rvnon}.  The basic flow of the proof is to divide
$I-\Phi^*\Phi$ into several parts, each of which can be written as a
sum of independent rank-1 matrices, and then apply the bounds in the
Appendix to each part.  This process is not difficult, but it is
somewhat laborious.  To aid the exposition, we have divided the proof
into steps, each one of which accomplishes a particular task.

We will not track the constants.  We will use $C$ to denote a constant that is independent of all the variables of interest ($s,n,m,\Ns$); the particular value of $C$ may change between instantiations.  We will give a constant a label in the subscript if we want to refer to it later.
  
To start, we set $Z=I-\Phi^*\Phi$.
\renewcommand{\theenumi}{{\bf E\arabic{enumi}}}
\begin{enumerate}

%
%
\item\label{step:Erank1} {\bf Write $Z$ as a sum of rank-1 matrices.}  
Recall from \eqref{eq:phimulti} that we can write the multichannel convolution matrix $\Phi$ as
\[
	\Phi = F^*\begin{bmatrix}G_1F_{(1:n)} & G_2F_{(1:n)} & \cdots & G_pF_{(1:n)}  \end{bmatrix}
\] 
where the $G_k$ are diagonal matrices consisting of the re-normalized Fourier transforms of the sources.  We can write $\Phi^*\Phi$ in matrix form as
\begin{align*}
	\Phi^*\Phi &= \begin{bmatrix} F^*_{(1:n)}G_1^* \\  F^*_{(1:n)}G_2^* \\ \vdots \\
	F^*_{(1:n)}G_p^*\end{bmatrix}
	FF^*
	\begin{bmatrix} G_1F_{(1:n)} & G_2F_{(1:n)} & \cdots & G_pF_{(1:n)} \end{bmatrix} \\
	&=
	\begin{bmatrix}
		F^*_{(1:n)} & & & \\
		 & F^*_{(1:n)} & &  \\
		& & \ddots & \\
		& & & F^*_{(1:n)}
	\end{bmatrix}
	\begin{bmatrix}
		G_1^*G_1 & G_1^*G_2 & \cdots & G_1^*G_p \\
		G_2^*G_1 & G_2^*G_2 & \cdots & G_2^*G_p \\
		\vdots & \vdots & & \vdots \\
		G_p^*G_1 & G_p^*G_2 & \cdots & G_p^*G_p
	\end{bmatrix}
	\begin{bmatrix}
		F_{(1:n)} & & & \\
		 & F_{(1:n)} & &  \\
		& & \ddots & \\
		& & & F_{(1:n)}
	\end{bmatrix},
\end{align*}
where we have used the fact that $FF^*=I$.  We can compact this expression by introducing $f_{k,\omega}\in\C^{np}$ as the vector which has column $\omega$ of $F^*_{(1:n)}$ in entries $(k-1)n+1,\ldots,kn$ and is zero elsewhere.  Then we can rewrite $\Phi^*\Phi$ as 
\begin{equation}
	\label{eq:PhitPhi}
	\Phi^*\Phi = \sum_{k=1}^p\sum_{j=1}^p\sum_{\omega=1}^m 
	g_{k}(\omega)^*g_j(\omega)f_{k,\omega}f_{j,\omega}^*.
\end{equation}
Since
\begin{equation}
	\label{eq:diagI}
	\sum_{k=1}^p\sum_{\omega=1}^m f_{k,\omega}f_{k,\omega}^* = I.
\end{equation}
we can now write $Z = I - \Phi^*\Phi$ as 
\begin{align}
	\label{eq:h1h2}
	Z &= \sum_{k=1}^p\sum_{\omega=1}^m 
	(1-|g_k(\omega)|^2)f_{k,\omega}f_{k,\omega}^* + 
	\sum_{j\not= k}\sum_{\omega=1}^m g_k(\omega)^*g_j(\omega)
	f_{k,\omega} f_{j,\omega}^* \\
	\nonumber
 	&:= H_1 + H_2.
\end{align}
Noting that $\E\|Z\|_s \leq \E\|H_1\|_s + \E\|H_2\|_s$, we will proceed by bounding each of $\E\|H_1\|_s$ and $\E\|H_2\|_2$ in turn.

%
%
\item \label{step:H1s} {\bf Bound $\E\|H_1\|_s$.} 
We start by making the random variables in the expression for $H_1$ symmetric. Let $H_1'$ be an independent copy of $H_1$ created from an independent Gaussian sequence $\{g'_k(\omega)\}_{k,\omega}$, and set 
\begin{equation}
	\label{eq:Ydef}
	Y = H_1-H_1' =
	\sum_{k=1}^p\sum_{\omega=1}^m(|g'_k(\omega)|^2-|g_k(\omega)|^2)
	f_{k,\omega}f_{k,\omega}^*.
\end{equation}
Our strategy is to control $\|Y\|_s$ and use that fact $\E\|H_1\|_s\leq\E\|Y\|_s$, since
\begin{align*}
	\E\|H_1\|_s &= \E\|H_1 - \E H_1'\|_s &\text{($H_1'$ is zero mean)}\\
	&= \E\left\|\E[H_1-H_1'|H_1]\right\|_s 
	&\text{(independence, $\E[H_1'|H_1] = E[H_1']$)} \\
	&\leq \E\left[\E\left\|H_1-H_1'\right\|_s | H_1\right]
	&\text{(Jensen's inequality)} \\
	&= \E\|H_1-H_1'\|_s &\text{(iterated expectation).}
\end{align*}

Next, we randomize the sum in \eqref{eq:Ydef}. $Y$ has the same distribution as
\begin{equation}
	\label{eq:Ypdef}
	Y' = \sum_{k=1}^p\sum_{\omega=1}^m\epsilon_k(\omega)
	(|g'_k(\omega)|^2-|g_k(\omega)|^2 )f_{k,\omega}f_{k,\omega}^*,
\end{equation}
where $\{\epsilon_k(\omega)\}_{k,\omega}$ is an independent Rademacher sequence --- the $\epsilon_k(\omega)$ are iid and take values of $\pm 1$ with equal probability.  Note that
\[
	\E\|Y\|_s = \E\|Y'\|_s = 
	\E\left[\E\left[ \|Y'\|_s ~|~ \{g_k(\omega)\},\{g'_k(\omega)\} \right] \right].
\]

Third, apply Lemma~\ref{lm:rvmean} with $v_{k,\omega}=|~|g'_k(\omega)|^2-|g_k(\omega)|^2~|^{1/2}f_{k,\omega}$.  We define the random variable $B$ as 
\begin{equation}
	\label{eq:Bmaxmax}
	B := 	\max_{k,\omega}\max\left\{|g_k(\omega)|,|g'_k(\omega)|\right\} ~\geq~
	\max_{k,\omega} |~|g_k(\omega)|^2 - |g'_k(\omega)|^2~|^{1/2}, 
\end{equation}
and note that
\[
	\|v_{k,\omega}\|_\infty 
	~\leq~
	\max_{k,\omega} |~|g_k(\omega)|^2 - |g'_k(\omega)|^2~|^{1/2}\cdot\|f_{k,\omega}\|_\infty
	~\leq~
	B/\sqrt{m}.
\]
With the $\{g_k(\omega)\},\{g'_k(\omega)\}$ fixed, Lemma~\ref{lm:rvmean} (with $M=B/\sqrt{m}$) tells us that
\begin{align*}
	\E\left[ \|Y'\|_s ~|~ \{g_k(\omega)\},\{g'_k(\omega)\} \right] &\leq
	\sqrt{\frac{C\cdot s\cdot L(s,n,m,p)}{m}}\cdot B \cdot 
	\left\|\sum_{k=1}^p\sum_{\omega=1}^m (|g'_k(\omega)|^2-|g_k(\omega)|^2)
	f_{k,\omega}f_{k,\omega}^* \right\|_s^{1/2},
\end{align*}
where $L(s,n,m,p) = \log^2(s)\log(np)\log(mp)$ --- to make things more compact, we will abbreviate this with $L$, and remember that the quantity depends on the sparsity, length of the channel, length of the pulse, and number of channels.  Then by the Cauchy-Schwarz inequality
\begin{align}
	\label{eq:EYs}
	\E\|Y\|_s &\leq \sqrt{\frac{C\cdot s\cdot L}{m}}\cdot (E[B^2])^{1/2}\cdot
	\left(\E\left\|\sum_{k=1}^p\sum_{\omega=1}^m (|g'_k(\omega)|^2-|g_k(\omega)|^2)
	f_{k,\omega} f_{k,\omega}^* \right\|_s\right)^{1/2}. 
\end{align}

We can estimate $\E[B^2]$ as follows.  $B^2$ is the maximum of the $|g_k(\omega)|^2,|g'_k(\omega)|^2$, which are chi-squared random variables of degree 2 (when $2\leq\omega\leq m/2$) or 1 (when $\omega = 1,m/2+1$).  In either case, $\E[|g_k(\omega)|^2]=1$, and 
\[
	\P\left\{|g_k(\omega)|^2 ~>~ u \right\} ~\leq~ e^{-u},
\]
and since there are $(m/2+1)\cdot p\cdot 2 = (m+2)p$ unique magnitudes among the $|g_k(\omega)|^2,|g'_k(\omega)|^2$,
\begin{equation}
	\label{eq:PB2}
	\P\left\{B^2 ~>~ u\right\} ~\leq~ \min\left(1, (m+2)p\cdot e^{-u}\right).
\end{equation}
Since $B^2$ is a positive random variable
\begin{align}
	\label{eq:EB2}
	\E[B^2] &= \int_0^\infty \P\{B^2 > u\}~du \\
	\nonumber
	 &\leq \log ((m+2)p) + (m+2)p\int_{\log ((m+2)p)}^\infty e^{-u}~du \\
	\nonumber
	 &= \log ((m+2)p) + 1.
\end{align}
Combining this with the fact that
\[
	\E\left\|\sum_{k=1}^p\sum_{\omega=1}^m 
	\left(|g'_k(\omega)|^2-|g_k(\omega)|^2\right) f_{k,\omega} f_{k,\omega}^*\right\|_s 
	~\leq~
	2\E\left\|\sum_{k=1}^p\sum_{\omega=1}^m 
	|g_k(\omega)|^2 f_{k,\omega} f_{k,\omega}^*\right\|_s, 
\]
the bound in \eqref{eq:EYs} becomes
\begin{align*}
	\E\|Y\|_s &\leq  \sqrt{\frac{C\cdot s\cdot L\log(mp)}{m}}\cdot
	\left(\E\left\|\sum_k\sum_\omega |g_k(\omega)|^2f_{k,\omega}f_{k,\omega}^*
	\right\|_s\right)^{1/2}. 
\end{align*}
Using \eqref{eq:diagI} and the fact that $\E\|I\|_s = 1$ yields
\begin{align}
	\label{eq:EYp1}
	\E\|Y\|_s
	&\leq \sqrt{\frac{C\cdot s\cdot L\log(mp)}{m}}\cdot
	\left(\E\left\|\sum_k\sum_\omega (1-|g_k(\omega)|^2)
	f_{k,\omega}f_{k,\omega}^* \right\|_s + 1\right)^{1/2} \\ \nonumber
	&= \sqrt{\frac{C\cdot s\cdot L\log(mp)}{m}}\cdot
	\left(\E\|H_1\|_s + 1\right)^{1/2} \\ \nonumber
	&\leq \sqrt{\frac{C\cdot s\cdot L\log(mp)}{m}}\cdot
	\left(\E\|Y\|_s + 1\right)^{1/2}
\end{align}
Invoking Lemma~\ref{lm:simpleineq} with $\beta = \E\|Y\|_s$, $\alpha = \sqrt{CsL\log(mp)/m}$, and $c=0$, we see that there exist constants $C_{10},C_{11}$ such that when
\[
	m \geq C_{10}\cdot s\cdot L\log(mp)
\]
we will have
\begin{equation}
	\label{eq:E1bound}
	\E\|H_1\|_s \leq \E\|Y\|_s\leq C_{11}\cdot\sqrt{\frac{s\cdot L\log(mp)}{m}}.
\end{equation}

%
\item \label{step:decoupling} {\bf Decouple $H_2$.}  Set
\begin{equation}
	\label{eq:H2p}
	H_2' = \sum_{j\not= k}\sum_{\omega=1}^m g_k(\omega)^*g'_j(\omega)
	f_{k,\omega}f_{j,\omega}^*,
\end{equation}
where $\{g'_k(\omega)\}$ is an independent sequence of random variables with the same distribution as $\{g_k(\omega)\}$.  We can now control $\|H_2\|_s$ by controlling $\|H_2'\|_s$, because 
\begin{equation}
	\label{eq:Edecouple}
	\E\|H_2\|_s ~\leq ~ C_{12} \E\|H_2'\|_s.
\end{equation}
For proof of \eqref{eq:Edecouple}, see \cite[Section 3.1]{pena99de}, which also provides a precise value for the constant $C_{12}$. 

%
%
\item\label{step:Eadddiag} {\bf Add back the diagonal.}  Write
\begin{align}
	\label{eq:H3H4}
	H_2' &= \sum_{j=1}^p\sum_{k=1}^p\sum_{\omega=1}^m g_k(\omega)^*g'_j(\omega)
	f_{k,\omega}f_{j,\omega}^* - 
	\sum_{k=1}^p\sum_{\omega=1}^m g_k(\omega)^*g'_k(\omega)
	f_{k,\omega}f_{k,\omega}^* \\
	\nonumber
	&:= H_3 + H_4
\end{align}
%
%
\item\label{step:EH4s} {\bf Bound $\E\|H_4\|_s$.}  Denoting the angle of the complex number $g_k(\omega)^*g'_k(\omega)$ as $\theta_k(\omega)$, $H_4$ has the same distribution as
\begin{equation}
	\label{eq:H4p}
	H'_4 = \sum_{k}\sum_\omega\epsilon_k(\omega) u_{k,\omega}v_{k,\omega}^*,
	\quad u_{k,\omega} = e^{\iunit\theta_k(\omega)/2}
	|g_k(\omega)| f_{k,\omega},\quad 
	v_{k,\omega} = e^{-\iunit\theta_k(\omega)/2}|g'_k(\omega)|f_{k,\omega}
\end{equation}
where $\{\epsilon_k(\omega)\}$ is an independent Rademacher sequence.  With $B$ as in \eqref{eq:Bmaxmax}, it follows that
\[
	\|u_{k,\omega}\|_\infty\leq B/\sqrt{m}
	\quad\text{and}\quad
	\|v_{k,\omega}\|_\infty\leq B/\sqrt{m}.
\]	
With the $\{g_k(\omega),g'_k(\omega)\}$ fixed, we apply Lemma~\ref{lm:rvnon} to get:
\begin{align*}
	\E[~\|H_4'\|_s~|~\{g_k(\omega)\},\{g'_k(\omega)\}] &\leq 
	\sqrt{\frac{C\cdot s\cdot L}{m}}\cdot B\cdot \\
	&\quad  \left(\left\|\sum_{k=1}^p\sum_{\omega=1}^m |g_k(\omega)|^2 
	f_{k,\omega}f_{k,\omega}^* \right\|_s^{1/2} + 
	\left\|\sum_{k=1}^p\sum_{\omega=1}^m |g'_k(\omega)|^2 
	f_{k,\omega}f_{k,\omega}^* \right\|_s^{1/2}\right).
\end{align*}
As in \eqref{eq:EYs}, we use the law of iterated expectation and the Cauchy-Schwarz inequality to remove the conditioning:
\begin{align*}
	\E\|H_4'\|_s &\leq 
	\sqrt{\frac{C\cdot s\cdot L}{m}}\cdot (\E[B^2])^{1/2}\cdot \\
	&\quad  \left(\E\left[\left(\left\|\sum_{k=1}^p\sum_{\omega=1}^m |g_k(\omega)|^2 
	f_{k,\omega}f_{k,\omega}^* \right\|_s^{1/2} + 
	\left\|\sum_{k=1}^p\sum_{\omega=1}^m |g'_k(\omega)|^2 
	f_{k,\omega}f_{k,\omega}^* \right\|_s^{1/2}\right)^2\right]\right)^{1/2}.
\end{align*}
The $\{g_k(\omega)\}$ and $\{g'_k(\omega)\}$ are identically distributed, and so using Jensen's inequality:
\[
	\E\|H_4'\|_s ~\leq~ \sqrt{\frac{C\cdot s\cdot L}{m}}\cdot (\E[B^2])^{1/2} \cdot
	\left(\E\left\|\sum_{k=1}^p\sum_{\omega=1}^m |g_k(\omega)|^2 
	f_{k,\omega}f_{k,\omega}^* \right\|_s\right)^{1/2}.
\]
Using the bound in \eqref{eq:EYp1} in Step~\ref{step:H1s}, we have
\begin{align*}
	\E\left(\left\|\sum_{k=1}^p\sum_{\omega=1}^m |g_k(\omega)|^2 
	f_{k,\omega}f_{k,\omega}^* \right\|_s^{1/2} \right) 
	&\leq \left(\E\left\|\sum_{k=1}^p\sum_{\omega=1}^m (1-|g_k(\omega)|^2) 
	f_{k,\omega}f_{k,\omega}^* \right\|_s + 1 \right)^{1/2} \\
	&= \left(\E\|H_1\|_s + 1\right)^{1/2}.
\end{align*}
Similar to \eqref{eq:E1bound} (but with different constants) we see that $m\geq C\cdot s\cdot L\log(mp)$ implies $\E\|H_1\|_s\leq 1$.  As we reasoned in \eqref{eq:EB2} above, we will also have $\E[B^2]\leq \log((m+2)p)+1$, and so there are constants $C_{13},C_{14}$ such that 
\begin{equation}
	\label{eq:EH4s}
	\E\|H_4\|_s \leq C_{14}\cdot \sqrt{\frac{s\cdot L\log(mp)}{m}}
\end{equation}
when
\[
	m \geq C_{13}\cdot s\cdot L\log(mp).
\]

%
%
\item \label{step:EH3s} {\bf Bound $\E\|H_3\|_s$.}  $H_3$ has the same distribution as
\begin{align*}
	H_3' &= \sum_{j=1}^p\sum_{k=1}^p\sum_{\omega=1}^m
	\epsilon_{\omega}g_k(\omega)^*g'_j(\omega)f_{k,\omega}f_{j,\omega}^* \\
	&= \sum_{\omega=1}^m \epsilon_\omega
	\left(\sum_{k=1}^p g_k(\omega)^*f_{k,\omega}\right)
	\left(\sum_{j=1}^p g'_j(\omega)f_{j,\omega}^*\right) \\
	&= \sum_{\omega=1}^m \epsilon_\omega u_\omega v_\omega^*,
\end{align*}
where 
\begin{equation}
	\label{eq:uv}
	u_\omega = \sum_{k=1}^p g_k(\omega)^*f_{k,\omega} 
	\quad\text{and}\quad
	v_\omega = \sum_{j=1}^p {g'_j(\omega)}^*f_{j,\omega}.  
\end{equation}
The $f_{k,\omega}$ have disjoint support for different values of $k$, so $\|u_\omega\|_\infty,\|v_\omega\|_\infty\leq B/\sqrt{m}$ where $B$ is defined as in \eqref{eq:Bmaxmax}.  Also note that
\[
	\sum_{\omega=1}^m u_\omega u_\omega^* = \sum_{\omega=1}^m\sum_{k=1}^p\sum_{j=1}^p 
	g_k(\omega)^*g_j(\omega)  f_{k,\omega}f_{j,\omega}^*,  
\]
and so recalling \eqref{eq:PhitPhi}, we see that $\sum_{\omega=1}^m u_\omega u_\omega^*$ and $\sum_{\omega=1}^m v_\omega v_\omega^*$ are independent realizations of $\Phi^*\Phi$.
Lemma~\ref{lm:rvnon} and Cauchy-Schwarz tell us that
\begin{align}
	\nonumber
	\E\|H_3\|_s = \E\|H_3'\|_s &\leq \sqrt{\frac{C\cdot s\cdot L'}{m}}\cdot
	(\E[B^2])^{1/2}\cdot (\E\|\Phi^*\Phi\|_s)^{1/2} \\
	\label{eq:EH3s}
 	&\leq C_{15}\cdot\sqrt{\frac{s\cdot L'\log(mp)}{m}}\cdot (\E\|Z\|_s + 1)^{1/2},
\end{align}
where $L':=L'(s,n,m,p):= \log^2s\log m\log np$.  Since $L'(s,n,m,p) < L(s,n,m,p)$, we can replace $L'$ with $L$ above.

%
%
\item {\bf Collect the results.}  To summarize, we have shown that 
\begin{align*}
	\E\|Z\|_s &\leq \E\|H_1\|_s + \E\|H_2\|_s 
	&\quad\text{(from \eqref{eq:h1h2} in Step~\ref{step:Erank1})}\\
	&\leq \E\|H_1\|_s + C_{12}\E\|H_2'\|_s 
	&\quad\text{(from \eqref{eq:Edecouple} in Step~\ref{step:decoupling})} \\
	&\leq \E\|H_1\|_s + C_{12}C_{15}\E\|H_3\|_s + C_{12}C_{14}\E\|H_4\|_s 
	&\quad\text{(from \eqref{eq:H3H4} in Step~\ref{step:Eadddiag})}.
\end{align*}
For $m\geq \max(C_{10},C_{13})\cdot s\cdot L\log(mp)$, we also have the bounds
\begin{align*}
	\E\|H_1\|_s &\leq C_{11}\sqrt{\frac{sL\log(mp)}{m}} 
	&\quad\text{(from \eqref{eq:E1bound} in Step~\ref{step:H1s})} \\
	\E\|H_4\|_s &\leq C_{14}\sqrt{\frac{sL\log(mp)}{m}} 
	&\quad\text{(from \eqref{eq:EH4s} in Step~\ref{step:EH4s})}\\
	\E\|H_3\|_s &\leq C_{15}\sqrt{\frac{sL\log(mp)}{m}}\cdot(E\|Z\|_s+1)^{1/2}
	&\quad\text{(from \eqref{eq:EH3s} in Step~\ref{step:EH3s})}.
\end{align*}
Thus
\[
	\E\|Z\|_s \leq C\sqrt{\frac{sL\log(mp)}{m}}\cdot (2 + \sqrt{E\|Z\|_s+1}).
\]
Using Lemma~\ref{lm:simpleineq}, we see that there is indeed a constant $C_6$ such that when $m\geq C_6\cdot sL\log(mp)$, we will have
\begin{equation}
	\label{eq:EZfinal}
	\E\|I-\Phi^*\Phi\|_s = \E\|Z\|_s \leq \sqrt{\frac{C\cdot s\cdot L\log(mp)}{m}}.
\end{equation}

%
%
\end{enumerate}

\section{Proof of Theorem~\ref{th:Pmain}}
\label{sec:prooftail}

We begin with a brief overview of the steps we will use to establish Theorem~\ref{th:Pmain}.  We will use the same decomposition of $Z$ as in Section~\ref{sec:proofmean}; dividing $Z$ into $Z=H_1+H_2$, decoupling $H_2$ to get $H_2'$, then dividing $H_2'$ into $H_2' = H_3+H_4$.  The essential idea is that we have estimated the means of $\|H_1\|_s$, $\|H_3\|_s$, and $\|H_4\|_s$ in the previous section; we will use these estimates and the concentration inequality in Lemma~\ref{lm:ltconc} to derive a tail bound for for each of these components in turn.

The main nuisance is that while we can write $H_1$, $H_3$, and $H_4$ as sums of independent random rank-1 matrices, the norms of these matrices are not bounded (as Gaussian random variables are not bounded).  To handle this, we define the random variable $B$ as in Section~\ref{sec:proofmean}
\[
	B = \max_{k,\omega}\max\{|g_k(\omega)|,|g'_k(\omega)|\},
\]
and then derive an estimate for $\|Z\|_s$ conditioned on the event
\[
	\mathcal{M} = \left\{B^2~\leq~ M\right\},
\]
where we will choose $M$ so that $\mathcal{M}$ likely to occur: $1/2\ll P\{\mathcal{M}\}<1$.  We will use $\E_\mathcal{M}$ and $\P_\mathcal{M}$ to denote expectation and probability conditioned on the event $\mathcal{M}$ occurring. 

We start by decomposing the tail bound as 
\[
	\P\{\|Z\|_s > \delta\} ~\leq~ 
	\P\{\|H_1\|_s > \delta/2\} + \P\{\|H_2\|_s > \delta/2\}.
\]
Step~\ref{step:PH1s} below bounds $\P\{\|H_1\|_s > \delta/2\}$.  Step~\ref{step:Pdecouple} decouples the sum for $H_2$ to get
\[
	\P\{\|H_2\|_s > \delta/2\} ~\leq~ C\P\{\|H_2'\|_s > \delta/2C\}.
\]
Steps~\ref{step:PH4s} and \ref{step:PH3s} then condition on $\mathcal{M}$, 
\[
	C\P\{\|H_2'\|_s > \delta/2C\} 
	~\leq~ 
	C\left(\P_\mathcal{M}\{\|H_2'\|_s > \delta/2C\} +\P\{\mathcal{M}^c\}\right),
\]
divide $H_2'$ into $H_2'=H_3+H_4$,
\[
	\P_\mathcal{M}\{\|H_2'\|_s > \delta/2C\}
	~\leq~ \P_\mathcal{M}\{\|H_3\|_s > \delta/4C\} + \P_\mathcal{M}\{\|H_4\|_s > \delta/4C \},
\]
and then bound $\P_\mathcal{M}\{\|H_3\|_s > \delta/4C\}$ and $\P_\mathcal{M}\{\|H_4\|_s > \delta/4C\}$ in turn.  These individual results are unified to finally establish the theorem in Step~\ref{step:Pcollect}.  

We will control each probability with a parameter $\gamma$, which can be selected as $0<\gamma<1/2$, and derive a bound for $m$ so that the total probability of failure is $O(\gamma)$.

\renewcommand{\theenumi}{{\bf P\arabic{enumi}}}
\begin{enumerate}
%
%
\item {\bf Tail bound for $\|H_1\|_s$.}
\label{step:PH1s}
Recall the definitions of $Y=H_1-H_1'$ and $Y'$, which has the same distribution as $Y$, from \eqref{eq:Ydef} and \eqref{eq:Ypdef}.  We can develop a tail bound for $\|H_1\|_s$ from a tail bound for $\|Y\|_s$ (or $\|Y'\|_s$) by following \cite[Section 6.1]{ledoux91pr}.  For any $a,\lambda > 0$
\[
	\P\left\{\|H_1'\|_s < a\right\}\P\left\{\|H_1\|_s > a + \lambda \right\}
	~\leq~
	\P\left\{\|Y\|_s > \lambda\right\}.
\]
In particular, if we take $a = 2\E\|H_1\|_s=2\E\|H_1'\|_s$, we will have $\P\{\|H_1'\|_s \leq a\} \geq 1/2$, since the median of a positive random variable is no more than twice its mean, and so
\[
	\P\left\{\|H_1\|_s > 2\E\|H_1\|_s + \lambda \right\}
	~\leq~
	2\P\left\{\|Y\|_s > \lambda\right\}
	~=~
	2\P\left\{\|Y'\|_s > \lambda\right\}.
\]

To bound the right hand side, we first condition on $\mathcal{M}$:
\[
	\P\left\{\|Y'\|_s > \lambda\right\} 
	~\leq~
	\P_\mathcal{M}\left\{\|Y'\|_s > \lambda\right\} + \P\{\mathcal{M}^c\}.
\]
Conditioned on $\mathcal{M}$, each term in the sum that comprises $Y'$ has bounded norm, and so we can apply Lemma~\ref{lm:ltconc} with 
$V_{k,\omega} = |~|g'_k(\omega)|^2-g_k(\omega)|^2~|f_{k,\omega}f_{k,\omega}^*$,
noting that
\[
	\|V_{k,\omega}\|_s\leq M\cdot\|f_{k,\omega}f_{k,\omega}^*\|_s = M\cdot s/m.
\]
This yields
\[
	\P_\mathcal{M}\left\{\|Y'\|_s > 
	C\left(u\E_\mathcal{M}\|Y'\|_s + t\frac{Ms}{m}\right) \right\} 
	\leq e^{-u^2} + e^{-t}.
\]

From \eqref{eq:E1bound}, we know that
\[
	\E_\mathcal{M}\|Y'\|_s ~\leq~ \frac{\E\|Y'\|_s}{\P\{\mathcal{M}\}} ~\leq~ C\sqrt{\frac{sLM}{m}}
\]
when $m\geq C\cdot sL\log(mp)$.  Plugging this into the expression for the tail bound gives us
\[
	\P_\mathcal{M}\left\{\|Y'\|_s > 
	C\left(u\sqrt{\frac{sLM}{m}} + t\frac{sM}{m}\right) \right\} 
	\leq e^{-u^2} + e^{-t}.
\]
Take $u=\sqrt{\log 1/\gamma},~t=\log 1/\gamma$ to get
\begin{equation}
	\label{eq:Yptail}
	\P_\mathcal{M}\left\{\|Y'\|_s > \lambda \right\} 
	\leq 2\gamma,
	\qquad
	\lambda = C\left(\sqrt{\frac{sLM\log(1/\gamma)}{m}} + \log(1/\gamma) \frac{sM}{m}\right).
\end{equation}

With this value of $\lambda$, we can use the bound  \eqref{eq:E1bound} for $\E\|H_1\|_s$ to get
\[
	2\E\|H_1\|_s + \lambda 
	~\leq~ 
	C\left(\sqrt{\frac{sL\log(mp)}{m}} + 
	\sqrt{\frac{sLM\log(1/\gamma)}{m}} + \log(1/\gamma) \frac{sM}{m}\right). 
\]
Since we are choosing $m$ to make all three terms above less than $1$, the middle term will dominate.  We see that there is a constant $C_{16}$ such that
\[
	m~\geq~ C_{16}\cdot\delta_s^{-2}\cdot s\cdot LM\log(1/\gamma),
\]
implies
\[
	2\E\|H_1\|_s + \lambda 
	~\leq~ 
	\delta_{s}/2,
\]
and hence
\begin{equation}
	\label{eq:PH1s}
	\P\left\{\|H_1\|_s > \delta_s/2 \right\}
	~\leq~
	4\gamma + 2\P\{\mathcal{M}^c\}.
\end{equation}

%
%
\item\label{step:Pdecouple} {\bf Decouple $H_2$.}
In Step~\ref{step:decoupling}, we saw that we could decouple $H_2$ and add back the diagonal, giving us the decomposition \eqref{eq:H3H4}.  We can also derive a tail bound for $\|H_2\|_s$ using the fact that
\[
	\P\{\|H_2\|_s \geq\lambda\} 
	~\leq~ 
	C_{17}\cdot\P\{\|H_2'\|_s\geq\lambda/C_{17}\}
\]
for a universal constant $C_{17}$, where $H_2'$ is the ``decoupled'' version of $H_2$ given by \eqref{eq:H2p} (for proof of this and an explicit value for $C_{17}$, see \cite[Section 3.4]{pena99de}).  We will decompose $H_2'=H_3+H_4$ as in \eqref{eq:H3H4} and proceed by finding tail bounds for $\|H_3\|_s$ and $\|H_4\|_s$ conditioned on $\mathcal{M}$.

%
%
\item {\bf Conditional tail bound for $\|H_4\|_s$.}
\label{step:PH4s}
We start with the tail bound for $\|H_4\|_s$.  Recall that $H_4$ has the same distribution as $H_4'$ in \eqref{eq:H4p}.  Using \eqref{eq:EH4s} from Step~\ref{step:EH4s}, we can bound the conditional mean
\[
	\E_\mathcal{M}\|H'_4\|_s 
	~\leq~ 
	\frac{\E\|H'_4\|_s}{\P\{\mathcal{M}\}} 
	~\leq~
	C\sqrt{\frac{sL\log(mp)}{m}}.
\]	
Recall that we can write $H_4'$ as a random sum of rank-1 matrices as shown in \eqref{eq:H4p}.  Conditioned on $\mathcal{M}$, 
\[
	\|u_{k,\omega}v_{k,\omega}^*\|_s ~\leq~ 
	M \|f_{k,\omega}f_{k,\omega}^*\|_s = 
	M\cdot s/m. 
\]
We now apply the concentration inequality \eqref{eq:ltconc} as before with $u=\sqrt{\log(C_{17}/\gamma)}$ and $t=\log (C_{17}/\gamma)$:
\[
	\P_\mathcal{M}\left\{\|H_4'\|_s > C\left(\sqrt{\frac{s\cdot LM\log(C_{17}/\gamma)}{m}} + 
	\frac{s\cdot M\log(C_{17}/\gamma)}{m}\right)\right\} \leq 
	2\gamma/C_{17}.
\]
Since we are making both terms on the right-hand-side inside the probability brackets less than one, the first one will dominate.  Thus there exists a constant $C_{18}$ so that
\[
	m ~\geq~ C_{18}\cdot \delta_s^{-2}\cdot s\cdot LM\log(1/\gamma).
\]
implies
\[
	\P_\mathcal{M}\left\{\|H'_4\|_s > \delta_s/4C_{17} \right\} 
	~\leq~
	2\gamma/C_{17}, 
\]
and finally	
\begin{equation}
	\label{eq:PH4s}
	\P_\mathcal{M}\left\{\|H_4\|_s > \delta_s/4C_{17}\right\} ~\leq~2\gamma/C_{17},
\end{equation}
since $H_4$ and $H_4'$ have the same distribution.

%
%
\item {\bf Conditional tail bound for $\|H_3\|_s$.}  
\label{step:PH3s}
As in Step~\ref{step:EH3s}, $H_3$ has the same distribution as
\[
	H'_3 = \sum_{\omega=1}^m\epsilon_\omega u_\omega v_\omega^*,
\]
with $u_\omega,v_\omega$ as in \eqref{eq:uv}.  In \eqref{eq:EH3s} in Step~\ref{step:EH3s}, we showed that
\[
	\E\|H'_3\|_s ~\leq~ C\sqrt{\frac{sL\log(mp)}{m}}\left(\E\|Z\|_s+1\right)^{1/2},
\]
and in \eqref{eq:EZfinal}, we showed that $\E\|Z\|_s<1$ for $m\geq CsL\log(mp)$.  So for this range of $m$, we have $\E\|H'_3\|_s\leq Cm^{-1/2}(sL\log(mp))^{1/2}$ and so
\[
	\E_\mathcal{M}\|H_3'\|_s ~\leq~ 
	\frac{\E\|H_3'\|_s}{\P\{\mathcal{M}\}}~\leq~ 
	C\sqrt{\frac{s\cdot L\log(mp)}{m}}.
\]
Conditioned on $\mathcal{M}$,
\[
	\|u_\omega v_\omega^*\|_s ~\leq~ 
	\sup_{\substack{|\SP|\leq s \\ y\in B^\SP_2}} |y^*u_\omega|\cdot
	\sup_{\substack{|\SP|\leq s \\ x\in B^\SP_2}} |x^*v_\omega|
	~\leq~
	M\cdot s/m.
\]
We apply the concentration inequality \eqref{eq:ltconc} with $u=\sqrt{\log(C_{17}/\gamma)}$ and $t=\log(C_{17}/\gamma)$, yielding
\[
	\P_\mathcal{M}\left\{\|H_3\|_s > C\left(\sqrt{\frac{s\cdot L\log(mp)\log(C_{17}/\gamma)}{m}} + 
	\frac{s\cdot M\log(C_{17}/\gamma)}{m}\right)\right\} 
	~\leq~ 2\gamma/C_{17}.
\]
Below we will see that we can take $M\sim \log(mp/\gamma)$; this means that there exists a constant $C_{19}$ such that
\[
	m ~\geq~ C_{19}\cdot \delta_s^{-2}\cdot s\cdot LM\log(1/\gamma)
\]
implies
\begin{equation}
	\label{eq:PH3s}
	\P_\mathcal{M}\left\{\|H_3\|_s > \delta_s/4C_{17}\right\} ~\leq~ 2\gamma/C_{17}.
\end{equation}

%
%
\item\label{step:Pcollect} {\bf Collect the tail bounds.}  

We have shown that
\begin{align*}
	\P\left\{\|Z\|_s > \delta_s\right\} &\leq
	\P\left\{\|H_1\|_s > \delta_s/2\right\} + \P\left\{\|H_2\|_s > \delta_s/2\right\} \\
	&\leq \P\left\{\|H_1\|_s > \delta_s/2\right\} + C_{17}\P\left\{\|H_2'\|_s > \delta_s/2C_{17}\right\}\\
	&\leq \P\left\{\|H_1\|_s > \delta_s/2\right\} + 
	C_{17}\P_\mathcal{M}\left\{\|H_2'\|_s > \delta_s/2C_{17}\right\} + C_{17}\P\{\mathcal{M}^c\} \\
	&\leq \P\left\{\|H_1\|_s > \delta_s/2\right\} +
	C_{17}\P_\mathcal{M}\left\{\|H_3\|_s > \delta_s/4C_{17}\right\} + \\
	&\qquad C_{17}\P_\mathcal{M}\left\{\|H_4\|_s > \delta_s/4C_{17}\right\} +
	C_{17}\P\{\mathcal{M}^c\}
\end{align*}


Combining the results from Steps~\ref{step:PH1s},\ref{step:PH4s}, and \ref{step:PH3s}, we see that for any $0<\gamma<1/2$, there is a constant $C_{20}$ such that when 
\[
	m\geq C_{20} \cdot \delta_s^{-2}\cdot s \cdot LM\log(1/\gamma)
\]
we will have all of the following
\begin{align*}
	\P\left\{\|H_1\|_s > \delta_s/2\right\} &\leq 4\gamma + 2\P\{\mathcal{M}^c\},
	&\quad\text{(from \eqref{eq:PH1s} in Step~\ref{step:PH1s})} \\
	C_{17}\P_\mathcal{M}\left\{\|H_4\|_s > \delta_s/4C_{17}\right\} &\leq 2\gamma,
	&\quad\text{(from \eqref{eq:PH4s} in Step~\ref{step:PH4s})} \\
	C_{17}\P_\mathcal{M}\left\{\|H_3\|_s > \delta_s/4C_{17}\right\} &\leq 2\gamma,
	&\quad\text{(from \eqref{eq:PH3s} in Step~\ref{step:PH3s}).}
\end{align*}

It remains to fix $M$.  As $\P\{B^2 > M\}\leq \min(1,~(m+2)p\cdot e^{-M})$ --- recall \eqref{eq:PB2} --- choosing 
\[
	M = \log(C_{17}(m+2)p/\gamma) \quad\Rightarrow\quad C_{17}\P\{\mathcal{M}^c\}\leq \gamma.
\]
With this choice of $M$ (and assuming that $C_{17}\geq 2$),
\begin{equation}
	\label{eq:PZfinal}
	\P\left\{\|Z\|_s > \delta_s\right\} ~\leq~ 10\gamma
\end{equation}
when
\begin{equation}
	\label{eq:mPfinal}
	m \geq C\cdot\delta_s^{-2}\cdot s\cdot L\log(mp/\gamma)\log(1/\gamma). 
\end{equation}
We establish the theorem by taking $\gamma = C(np)^{-1}$ and noting that
\[
	L\log(mp/\gamma)\log(1/\gamma) ~\leq~ \log^6 (np)
\]
and so taking $m$ as in \eqref{eq:m-tail} will guarantee \eqref{eq:mPfinal} and hence \eqref{eq:PZfinal}.

\end{enumerate}

%


\appendix

\section{Random Matrices}

\subsection{Random sums of rank-1 matrices}

The theoretical results in this paper depend crucially on our ability to estimate the size of the $\|\cdot\|_s$ norm of random matrices that can be written as the sum of independent rank-1 matrices:
\begin{equation}
	\label{eq:randomsum1}
	\left\|\sum_{i=1}^m \epsilon_i u_iv_i^*\right\|_s,
\end{equation}
where the $u_i$ and $v_i$ are vectors in $\C^n$ and the $\{\epsilon_i\}$ are iid Bernoulli random variables taking the values $\pm 1$ with equal probability.  Taking $U,V$ as the $n\times m$ matrices with the $v_i,u_i$ as columns, and letting $\Sigma$ be the diagonal matrix with $\Sigma_{ii} = \epsilon_i$, \eqref{eq:randomsum1} can be written more compactly as $\|U\Sigma V^*\|_s$.

In \cite{rudelson08sp}, Rudelson and Vershynin provided a bound for the expectation of \eqref{eq:randomsum1} when $U=V$.  The following is Lemma~3.8 in \cite{rudelson08sp}:
\begin{lemma}
\label{lm:rvmean}
Let the vectors $v_i$ and the matrices $V$ and $\Sigma$ be defined as above, and suppose that $\|v_i\|_\infty\leq M$. 
Then for some constant $C$, 
\begin{equation}
	\label{eq:rvmean}
	\E\left\|V\Sigma V^*\right\|_s~\leq~
	C\cdot M\cdot s^{1/2}\cdot\log s\sqrt{\log m\log n}\cdot 
	\left\|VV^*\right\|_s^{1/2}.
\end{equation}
\end{lemma}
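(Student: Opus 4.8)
The statement is Lemma~3.8 of \cite{rudelson08sp}, so the plan is to indicate how its proof runs. Since $\Sigma$ is real and diagonal, $V\Sigma V^{*}=\sum_{i=1}^{m}\epsilon_{i}v_{i}v_{i}^{*}$ is Hermitian, and for each $\SP$ with $|\SP|\le s$ the restricted operator norm of a Hermitian matrix equals $\sup_{x\in B^{\SP}_{2}}|x^{*}(V\Sigma V^{*})x|$. Hence
\[
\|V\Sigma V^{*}\|_{s}\;=\;\sup_{x\in T}\Bigl|\sum_{i=1}^{m}\epsilon_{i}\,|\langle v_{i},x\rangle|^{2}\Bigr|,
\qquad T:=\bigcup_{|\SP|\le s}B^{\SP}_{2}\subset\C^{n},
\]
which is the supremum of the (centered, symmetric) Rademacher process $Y_{x}:=\sum_{i}\epsilon_{i}|\langle v_{i},x\rangle|^{2}$ over the $s$-sparse sphere $T$. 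Its increments $Y_{x}-Y_{x'}$ are Rademacher sums, hence subgaussian with parameter $d(x,x'):=\bigl(\sum_{i}(|\langle v_{i},x\rangle|^{2}-|\langle v_{i},x'\rangle|^{2})^{2}\bigr)^{1/2}$, so Dudley's entropy inequality gives $\E\,\|V\Sigma V^{*}\|_{s}\le C\int_{0}^{\infty}\sqrt{\log N(T,d,u)}\,du$.

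The first reduction is to linearize this chaos metric. From $\bigl|\,|a|^{2}-|b|^{2}\,\bigr|\le|a-b|(|a|+|b|)$ one gets $d(x,x')\le 2R\,\|V^{*}(x-x')\|_{2}$ with $R:=\sup_{x\in T}\max_{i}|\langle v_{i},x\rangle|$; since $|\langle v_{i},x\rangle|\le\|v_{i}\|_{\infty}\|x\|_{1}\le M\sqrt{s}$ for an $s$-sparse unit vector, we may take $R\le M\sqrt{s}$, and this is where the factor $Ms^{1/2}$ in the bound originates. Therefore $N(T,d,u)\le N(V^{*}T,\|\cdot\|_{2},u/2R)$, and after a change of variable $\E\,\|V\Sigma V^{*}\|_{s}\le C\,R\int_{0}^{D}\sqrt{\log N(V^{*}T,\|\cdot\|_{2},v)}\,dv$, where $D\le 2\|VV^{*}\|_{s}^{1/2}$ is the Euclidean radius of $V^{*}T$ (because $\|V^{*}x\|_{2}^{2}=x^{*}VV^{*}x\le\|VV^{*}\|_{s}$ on $T$). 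Everything now reduces to bounding the covering numbers of $V^{*}T$ at every scale and integrating.

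These covering numbers are estimated by a combination of tools, exactly as in Rudelson's original lemma on sums of rank-one matrices. At the coarser scales one uses Maurey's empirical method: an $s$-sparse unit $x$ satisfies $\|x\|_{1}\le\sqrt{s}$, so $V^{*}x$ is a small multiple of a convex combination of the $n$ columns $V^{*}e_{j}$, each of norm at most $\|VV^{*}\|_{s}^{1/2}$, and approximating such a point by an average of few columns keeps $\log N(V^{*}T,\|\cdot\|_{2},v)$ only logarithmic in $n$. At the finer scales one uses volumetric estimates together with the fact that $\sum_{i}v_{i}v_{i}^{*}$ has rank at most $m$, so that $V^{*}T$ lives in an $\le m$-dimensional subspace; this is the analogue of the step that, via the non-commutative Khintchine inequality with moment exponent of order $\log m$, produces the factor $\sqrt{\log m}$ in Rudelson's lemma. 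Splitting Dudley's integral at a carefully chosen threshold, bounding the coarse part with the Maurey estimate and the fine part with the low-rank estimate, and summing over the (roughly $\log s$) dyadic scales yields $\int_{0}^{D}\sqrt{\log N}\,dv\lesssim\|VV^{*}\|_{s}^{1/2}\,\log s\,\sqrt{\log m\,\log n}$; multiplying by $R\le M\sqrt{s}$ gives the claimed inequality.

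The hard part is precisely this last optimization. A crude union bound over the $\binom{n}{s}$ possible supports of $x$, or a crude volumetric net for $T$, would each charge a factor $\sqrt{s\log n}$ at a non-negligible scale and thereby lose a full power of $\sqrt{s}$ relative to the stated bound; the point of Maurey's method and the low-rank/Schatten-norm machinery is to confine $n$, $m$, and $s$ to logarithms, and balancing these estimates so that the finest scales (which must resolve which support $x$ has) contribute only negligibly is the delicate bookkeeping. This is carried out in detail in \cite{rudelson08sp}, to which we refer.
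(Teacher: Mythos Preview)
The paper does not prove this lemma at all; it simply quotes it as Lemma~3.8 of \cite{rudelson08sp}. Your proposal correctly identifies this and goes further by sketching the argument, which is more than the paper does. The broad shape of your sketch---express $\|V\Sigma V^{*}\|_{s}$ as the supremum of a subgaussian process, apply Dudley's entropy integral, and bound covering numbers by a mix of Maurey's empirical method and a fine-scale estimate---is indeed the structure of the Rudelson--Vershynin proof.

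However, your linearization of the chaos metric is not the one Rudelson--Vershynin use, and the difference is not cosmetic. You bound
\[
d(x,x')\;\le\;2R\,\|V^{*}(x-x')\|_{2},\qquad R\le M\sqrt{s},
\]
pulling the factor $M\sqrt{s}$ out in front and reducing to \emph{Euclidean} covering numbers of $V^{*}T$. With this factorization, both the Maurey bound and a volumetric bound on $V^{*}T$ (which, support by support, is a union of $\binom{n}{s}$ ellipsoids of dimension $\le s$) give $\log N\gtrsim s$ at all relevant scales, so the Dudley integral picks up an additional $\sqrt{s}$; multiplying by your prefactor $R\le M\sqrt{s}$ then yields a bound of order $M s\,\|VV^{*}\|_{s}^{1/2}$ times logarithms, one $\sqrt{s}$ worse than the lemma claims. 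Your remark that ``Maurey's method and the low-rank/Schatten-norm machinery \ldots\ confine $n$, $m$, and $s$ to logarithms'' is not borne out under this factorization.

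What Rudelson--Vershynin actually do---and what the paper reproduces when proving the non-symmetric extension, Lemma~\ref{lm:rvnon}---is the opposite factorization:
\[
d(x,x')\;\lesssim\;\|VV^{*}\|_{s}^{1/2}\cdot\|x-x'\|_{V},\qquad \|x\|_{V}:=\max_{i}|\langle v_{i},x\rangle|.
\]
Here $\|VV^{*}\|_{s}^{1/2}$ comes out of the metric comparison, and the entropy integral is $\int_{0}^{\infty}\sqrt{\log N(T,\|\cdot\|_{V},t)}\,dt\lesssim M\sqrt{s}\,\log s\,\sqrt{\log m\,\log n}$; this is precisely the proposition the paper quotes inside the proof of Lemma~\ref{lm:rvnon}. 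The $\sqrt{\log m}$ arises because $\|\cdot\|_{V}$ is a maximum of $m$ linear forms, which is what makes the covering-number estimate work; it does not come from a rank-$m$ volumetric argument, nor from non-commutative Khintchine (that tool belongs to Rudelson's earlier 1999 lemma, not to Lemma~3.8 of \cite{rudelson08sp}). So while your sketch has the right spirit, the specific route you describe would not deliver the stated bound.
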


The following is the analogous result for the more general case when $U\not= V$:
\begin{lemma}
\label{lm:rvnon}
Let $V,\Sigma$ be as in Lemma~\ref{lm:rvmean}, and let $U$ be another $n\times m$ matrix whose maximum entry is less than $M$.  Then for some constant $C$, 
\[
	\E\|U\Sigma V^*\|_s ~\leq~
	C\cdot M\cdot s^{1/2}\cdot\log s\sqrt{\log m\log n}\cdot
	\left(\|VV^*\|_s^{1/2} + \|UU^*\|_s^{1/2}\right).
\]
\end{lemma}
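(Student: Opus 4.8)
The plan is to reduce the general estimate to the symmetric case already established in Lemma~\ref{lm:rvmean} by a polarization argument: $U\Sigma V^*$ can be written as a fixed linear combination of four matrices of the form $W\Sigma W^*$, each of which is directly covered by Lemma~\ref{lm:rvmean}. The starting point is the operator identity, valid for every realization of the Rademacher diagonal $\Sigma$,
\[
	U\Sigma V^* ~=~ \frac14\sum_{k=0}^3 \iunit^{k}\,(U+\iunit^{k} V)\,\Sigma\,(U+\iunit^{k} V)^* .
\]
I would verify this by expanding each summand — using $(\iunit^{k}V)^* = \iunit^{-k}V^*$ — and observing that, summed over $k=0,\dots,3$, the coefficients multiplying $U\Sigma U^*$, $V\Sigma U^*$, and $V\Sigma V^*$ all vanish (they are $\sum_k\iunit^{k}=0$ and $\sum_k\iunit^{2k}=0$), while the coefficient of $U\Sigma V^*$ equals $\sum_k 1 = 4$. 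Since the identity holds pathwise in $\Sigma$, I can take $\|\cdot\|_s$ of both sides, apply the triangle inequality, and only then take the expectation, obtaining
\[
	\E\|U\Sigma V^*\|_s ~\leq~ \frac14\sum_{k=0}^3 \E\bigl\|(U+\iunit^{k} V)\,\Sigma\,(U+\iunit^{k} V)^*\bigr\|_s .
\]

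Next I would apply Lemma~\ref{lm:rvmean} to each matrix $W_k := U+\iunit^{k}V$, which needs two ingredients. First, since each entry of $U$ and of $V$ has modulus at most $M$, each entry of $W_k$ has modulus at most $2M$, so the lemma applies with $M$ replaced by $2M$. Second, I need $\|W_kW_k^*\|_s^{1/2}$ expressed through $\|UU^*\|_s$ and $\|VV^*\|_s$. Expanding $W_kW_k^* = UU^* + \iunit^{-k}UV^* + \iunit^{k}VU^* + VV^*$ and using the triangle inequality for $\|\cdot\|_s$ together with $\|VU^*\|_s = \|UV^*\|_s$ reduces matters to the cross term $\|UV^*\|_s$, and a Cauchy--Schwarz step restricted to $s$-sparse unit test vectors $x,y$ gives $|y^*UV^*x| = |(U^*y)^*V^*x| \le \|U^*y\|_2\|V^*x\|_2 = (y^*UU^*y)^{1/2}(x^*VV^*x)^{1/2}$, so that $\|UV^*\|_s \le \|UU^*\|_s^{1/2}\|VV^*\|_s^{1/2}$ and hence $\|W_kW_k^*\|_s \le (\|UU^*\|_s^{1/2}+\|VV^*\|_s^{1/2})^2$. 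Feeding $2M$ and this estimate into Lemma~\ref{lm:rvmean} bounds each of the four terms by $C\cdot M\cdot s^{1/2}\log s\sqrt{\log m\log n}\cdot(\|UU^*\|_s^{1/2}+\|VV^*\|_s^{1/2})$, and averaging over $k=0,\dots,3$ yields the claim, the bounded extra factors being absorbed into $C$.

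\textbf{Main obstacle.} There is no genuine obstacle here — the statement is a reduction, not a new concentration estimate, so the effort is essentially bookkeeping — but two points deserve care. The first is the polarization identity itself: it must be read as an identity of operators with complex fourth-root-of-unity coefficients so that the three unwanted terms $U\Sigma U^*$, $V\Sigma U^*$, $V\Sigma V^*$ cancel, and it must be invoked before taking expectations so that the triangle inequality is legitimate. The second is that the right-hand side should involve only $\|UU^*\|_s$ and $\|VV^*\|_s$, which is exactly what the $s$-sparse Cauchy--Schwarz bound on $\|UV^*\|_s$ provides. A more pedestrian alternative would be to stack $U$ and $V$ into the $2n\times m$ matrix $W$ obtained by placing $U$ above $V$, apply Lemma~\ref{lm:rvmean} to $W\Sigma W^*$, and read off $U\Sigma V^*$ as its top-right $n\times n$ block; this works too but inflates the effective sparsity level by a factor of two, which the polarization route avoids.
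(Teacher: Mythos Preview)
Your polarization argument is correct and complete: the operator identity, the entry bound $2M$ for $W_k=U+\iunit^kV$, the Cauchy--Schwarz control of the cross term $\|UV^*\|_s$, and the final averaging all check out, and Lemma~\ref{lm:rvmean} is stated for columns in $\C^n$, so the complex $W_k$ are legitimate inputs.

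The paper takes a genuinely different route. Rather than reducing to the symmetric case, it reproves the estimate from scratch by running the Dudley entropy integral directly on the \emph{bilinear} Gaussian process $G(x_a,x_b)=\sum_i g_i\<x_a,u_i\>\<v_i,x_b\>$ indexed over $T=\bigcup_{|\SP|\leq s}B_2^\SP\otimes B_2^\SP$. The main work there is to bound the process metric by $R'\max(\|x_a-y_a\|_U,\|x_b-y_b\|_V)$ with $R'=\|UU^*\|_s^{1/2}+\|VV^*\|_s^{1/2}$, after which the covering numbers factor and the Rudelson--Vershynin entropy estimate applies to each factor. Your approach is considerably shorter and more transparent, since it treats Lemma~\ref{lm:rvmean} as a black box and avoids repeating any chaining; the paper's approach, by contrast, makes the role of the two norms $\|\cdot\|_U$ and $\|\cdot\|_V$ in the metric explicit and is closer to how one would proceed if the symmetric lemma were not already available. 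Neither yields a sharper bound than the other---the structural form $\|UU^*\|_s^{1/2}+\|VV^*\|_s^{1/2}$ appears naturally in both, in yours via $\|W_kW_k^*\|_s\le(\|UU^*\|_s^{1/2}+\|VV^*\|_s^{1/2})^2$ and in the paper's via the radius $R'$.
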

\begin{proof}
As in \cite{rudelson08sp}, we can bound $\|U\Sigma V^*\|_s$ by the supremum of a Gaussian random process.  Letting $\{g_i\}$ be a sequence of iid Gaussian random variables with zero mean and unit variance, we have 
\begin{align}
	\nonumber
	\E\|U\Sigma V^*\| &= \E\sup_{\substack{|\SP|\leq s\\x_a,x_b\in B^\SP_2}}
	\left|\sum_{i=1}^m\epsilon_i\<x_a,u_i\>\<v_i,x_b\> \right| \\
	\label{eq:gproc}
	&\leq C\E\sup_{\substack{|\SP|\leq s\\x_a,x_b\in B^\SP_2}}
	\left|\sum_{i=1}^m g_i\<x_a,u_i\>\<v_i,x_b\> \right|.
\end{align}
We now apply the Dudley inequality (see, for example, \cite[Chapter 2]{talagrand05ge}), which states that for a Gaussian process $G(x)$ indexed by a set $x\in T$, the expected maximum value of $G$ over $T$ obeys
\begin{equation}
	\label{eq:dudley}
	\E\sup_{x\in T} |G(x)| ~\leq~ C\int_0^\infty \log^{1/2} N(T,\delta,t) dt.
\end{equation}
Above, $N(T,\delta,t)$ is the $t$-covering number for $T$ under the metric $\delta(x,y) = (\E[|G(x)-G(y)|^2])^{1/2}$.  The process in \eqref{eq:gproc} is indexed by two vectors $x_a,x_b$, so here
\[
	G(x_a,x_b) = \sum_i g_i \<x_a,u_i\>\<v_i,x_b\>,
	\quad\text{and}\quad
	T = \bigcup_{|\SP|\leq s}B_2^\SP\otimes B_2^\SP,
\] 
with the metric $\delta$ given by
\[
	\delta\left((x_a,x_b),(y_a,y_b)\right) = \left(\sum_{i=1}^m 
	\left(\<x_a,u_i\>\<v_i,x_b\> - \<y_a,u_i\>\<v_i,y_b\>\right)^2\right)^{1/2}.
\]
We can bound this distance using
\begin{align*}
	\delta\left((x_a,x_b),(y_a,y_b)\right) &=
 	\frac{1}{2}\left(\sum_{i=1}^m \left(\<x_a+y_a,u_i\>\<v_i,x_b-y_b\> + 	
	\<x_a-y_a,u_i\>\<v_i,x_b+y_b\> \right)^2\right)^{1/2} \\
 	&\leq \frac{1}{2}\cdot\max_i\left(|\<u_i,x_a-y_a\>|,~|\<v_i,x_b-y_b\>|\right)
	\left(\sum_{i=1}^m\left(|\<x_a+y_a,u_i\>| + |\<x_b+y_b,v_i\>| \right)^2
	\right)^{1/2} \\
 	&\leq R\cdot \max_i\left(|\<u_i,x_a-y_a\>|,~|\<v_i,x_b-y_b\>|\right),
\end{align*}
where 
\begin{align*}
	R^2 &= \frac{1}{4}\sup_{(x_a,x_b)\in T} 
	\sum_{i=1}^m\left(|\<x_a+y_a,u_i\>| + |\<x_b+y_b,v_i\>| \right)^2 \\
	&\leq \frac{1}{4}\sup_{(x_a,x_b)\in T}\left(
	\sum_{i=1}^m |\<x_a+y_a,u_i\>|^2 + 
	\sum_{i=1}^m |\<x_a+y_a,v_i\>|^2 +
	2\sum_{i=1}^m|\<x_a+y_a,u_i\>| \cdot |\<x_b+y_b,v_i\>|\right) \\
	&\leq \|UU^*\|_s + \|VV^*\|_s + 
	\frac{1}{2}\sup_{(x_a,x_b)\in T} 
	\sum_{i=1}^m|\<x_a+y_a,u_i\>| \cdot |\<x_b+y_b,v_i\>| \\
	&\leq \|UU^*\|_s + \|VV^*\|_s + 
	\frac{1}{2}\sup_{(x_a,x_b)\in T}
	\left(\sum_{i=1}^m|\<x_a+y_a,u_i\>|^2\right)^{1/2}\cdot
	\left(\sum_{i=1}^m|\<x_b+y_b,v_i\>|^2\right)^{1/2} \\
	&\leq \|UU^*\|_s + \|VV^*\|_s + 2\|UU^*\|_s^{1/2}\|VV^*\|_s^{1/2} \\
	&= \left(\|UU^*\|_s^{1/2} + \|VV^*\|_s^{1/2} \right)^2 =: R'^2.
\end{align*}
Defining the norms
\[
	\|x\|_U = \max_i|\<x,u_i\>|\quad\text{and}\quad \|x\|_V = \max_i|\<x,v_i\>|,
\]
our bound on the metric becomes
\[
	\delta\left((x_a,x_b),(y_a,y_b)\right) \leq
	R'\max\left(\|x_a-y_a\|_U,\|x_b-y_b\|_V\right).
\]

Now let 
\begin{equation}
	\label{eq:Ts}
	T' = \bigcup_{|\SP|\leq s}B^\SP_2
\end{equation}
and note that $T\subset T'\otimes T'$, and so $N(T,\delta,t)\leq N(T'\otimes T', \delta, t)$.  If $\mathcal{C}_1$ is a $t$-cover for $T'$ under the metric $\|\cdot\|_U$ and $\mathcal{C}_2$ is a $t$-cover for $T'$ under the metric $\|\cdot\|_V$, then $\mathcal{C}_1\otimes \mathcal{C}_2$ is a $t$ cover for $T'\otimes T'$ under the metric $\max(\|\cdot\|_U,\|\cdot\|_V)$.  Hence
\[
	N(T,\delta,t) ~\leq~ N(T',R'\|\cdot\|_U,t)\cdot N(T',R'\|\cdot\|_V,t)
\]
and
\begin{equation}
	\label{eq:newintegrals}
	\int_0^\infty \log^{1/2} N(T,\delta,t)dt ~\leq~
	R'\int_0^\infty  \log^{1/2}N(T',\|\cdot\|_U,t)dt ~+~
	R'\int_0^\infty  \log^{1/2}N(T', \|\cdot\|_V,t)dt.
\end{equation}

We can now apply estimates for the covering numbers in \eqref{eq:newintegrals} that were developed in \cite{rudelson08sp}, where the following is shown.
\begin{proposition}
	Let $x_1,\ldots,x_m$ be vectors in $\C^n$ with $\|x_i\|_\infty\leq M$, and define the norm $\|x\|_X = \max_i |\<x,x_i\>|$.  For $T'$ as in \eqref{eq:Ts}, 
\[
	\int_0^\infty \log^{1/2} N(T',\|\cdot\|_X,t) dt \leq C\cdot M\sqrt{s}\cdot
	\log s\log^{1/2}m\log^{1/2}n
\]
for some constant $C$. 
\end{proposition}
Combining this proposition with \eqref{eq:newintegrals} yields
\[
	\E\|U\Sigma V^*\|_s \leq C\cdot R'\cdot M \cdot \sqrt{s}\cdot
	\log s\log^{1/2}m\log^{1/2}n,
\]
establishing the lemma.
\end{proof}

\subsection{A concentration inequality}

The following is a specialized version of \cite[Th.~6.17]{ledoux91pr}, and appears in the following form in \cite[Prop.~19]{tropp10be}.
\begin{lemma}
\label{lm:ltconc}
Let $V_1,\ldots,V_m$ be a sequence of square matrices with $\|V_i\|_s\leq M$, and let $\{\epsilon_i\}$ be a Rademacher seqeunce.  Set $V=\sum_i\epsilon_iV_i$.  Then for all $u,t\geq 1$
\begin{equation}
\label{eq:ltconc}
\P\left\{\|V\|_s ~\geq~ C(u\E\|V\|_s + tM)\right\} ~\leq~ e^{-u^2} + e^{-t}.
\end{equation}
\end{lemma}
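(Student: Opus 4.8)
The plan is to recognize $\|V\|_s$ as the supremum of a Rademacher sum indexed by a compact set and then invoke the deviation inequality of Ledoux and Talagrand directly. By the definition \eqref{eq:snorm},
\[
	\|V\|_s ~=~ \sup_{\substack{|\SP|\le s\\ x,y\in B^\SP_2}}\left|\sum_{i=1}^m \epsilon_i\, y^*V_ix\right|,
\]
so $\|V\|_s$ is the norm of the sum $\sum_i \epsilon_iV_i$ of independent, symmetric random variables taking values in the space of $n\times n$ matrices equipped with $\|\cdot\|_s$, each of which is bounded in that norm by $\|\epsilon_iV_i\|_s=\|V_i\|_s\le M$. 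The index set $\bigcup_{|\SP|\le s}B^\SP_2\otimes B^\SP_2$ is a finite union of products of Euclidean balls, hence compact, and the supremum may be restricted to a countable dense subset, so no measurability difficulty arises.

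First I would apply the abstract deviation inequality \cite[Th.~6.17]{ledoux91pr}. Specialized to the present situation, it is a Bernstein-type bound for $\|V\|_s$ around its mean: writing $\sigma^2:=\sup_{|\SP|\le s,\,x,y\in B^\SP_2}\sum_{i=1}^m (y^*V_ix)^2$ for the weak variance, there are absolute constants such that
\[
	\P\bigl\{\|V\|_s ~\ge~ C\bigl(\E\|V\|_s + u\,\sigma + t\,M\bigr)\bigr\} ~\le~ e^{-u^2} + e^{-t}
	\qquad\text{for all }u,t\ge 1 ,
\]
the two terms on the right reflecting the sub-Gaussian fluctuations at scale $\sigma$ and the exponential fluctuations at scale $M$. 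It then remains to collapse the first two terms inside the probability into a single multiple of $\E\|V\|_s$.

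The key observation for this is that the weak variance is itself controlled by the mean. For any fixed admissible pair $x,y$, Khintchine's inequality gives $\bigl(\sum_i (y^*V_ix)^2\bigr)^{1/2}\le \sqrt{2}\,\E\bigl|\sum_i\epsilon_i\, y^*V_ix\bigr|\le \sqrt{2}\,\E\|V\|_s$; taking the supremum over $x,y$ yields $\sigma\le\sqrt2\,\E\|V\|_s$. Since $u\ge1$, this gives $\E\|V\|_s + u\,\sigma \le (1+\sqrt2)\,u\,\E\|V\|_s$, so the displayed bound becomes $\P\{\|V\|_s\ge C'(u\,\E\|V\|_s + t\,M)\}\le e^{-u^2}+e^{-t}$, which is \eqref{eq:ltconc} after renaming the constant. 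This recovers exactly the form recorded in \cite[Prop.~19]{tropp10be}.

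The computation is routine; the only point that needs care is the bookkeeping in passing from the parametrization of \cite[Th.~6.17]{ledoux91pr} — which is phrased somewhat more generally, and in terms of the median rather than the mean — to the clean two-parameter statement above, together with the elementary fact that the median of the nonnegative variable $\|V\|_s$ is at most twice $\E\|V\|_s$. Once the variance estimate $\sigma\le\sqrt2\,\E\|V\|_s$ is in place, no genuine obstacle remains.
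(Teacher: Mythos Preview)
Your sketch is correct and follows exactly the standard derivation. Note, however, that the paper does not actually prove this lemma: it simply records it as a specialization of \cite[Th.~6.17]{ledoux91pr} in the form given by \cite[Prop.~19]{tropp10be}, with no argument supplied. What you have written is precisely the route used in \cite{tropp10be} to pass from the general Ledoux--Talagrand inequality to the clean two-parameter statement --- identify $\|V\|_s$ as a sup of Rademacher sums, apply the Bernstein-type tail bound, and absorb the weak variance into $\E\|V\|_s$ via Khintchine --- so there is nothing to compare.
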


\section{A simple inequality}

\begin{lemma}
	\label{lm:simpleineq}
	Fix $\alpha\leq 1$ and $c \geq 0$.  If
	\begin{equation}
		\label{eq:bgiven}
		\beta \leq \alpha\left(c + \sqrt{\beta+1}\right) \quad\text{for}~\beta \geq 0,
	\end{equation}
	then
	\[
		\beta \leq \alpha\left(c + 1/2 + \sqrt{c + 5/4}\right)
		\quad\text{for}~\beta \geq 0.
	\]
\end{lemma}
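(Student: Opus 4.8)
The plan is to turn the hypothesis \eqref{eq:bgiven} into a quadratic inequality via the substitution $u=\sqrt{\beta+1}$. With this substitution $\beta=u^2-1$ and $u\geq 1$, so the hypothesis reads $u^2-1\leq\alpha(c+u)$, i.e.
\[
	u^2-\alpha u-(1+\alpha c)~\leq~0.
\]
Since this quadratic in $u$ opens upward, $u$ cannot exceed its larger root, which gives
\[
	u~\leq~\tfrac12\left(\alpha+\sqrt{\alpha^2+4+4\alpha c}\right).
\]
(For $\alpha\leq 0$ the right side of \eqref{eq:bgiven} is at most $0$ while $\beta\geq 0$, so the hypothesis forces $\beta=0$ and the conclusion is immediate; hence we may assume $\alpha>0$ for the rest.)

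Next I would square this bound and subtract $1$ to control $\beta$ itself. Writing $D=\sqrt{\alpha^2+4+4\alpha c}$ and using $D^2=\alpha^2+4+4\alpha c$, one gets $u^2\leq\tfrac12\left(\alpha^2+\alpha D+2+2\alpha c\right)$, and therefore
\[
	\beta~=~u^2-1~\leq~\tfrac{\alpha}{2}\left(\alpha+2c+\sqrt{\alpha^2+4+4\alpha c}\right).
\]

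Finally I would simplify this estimate using the hypotheses $\alpha\leq 1$ and $c\geq 0$: these give $\alpha^2\leq 1$ and $4\alpha c\leq 4c$, hence $\alpha^2+4+4\alpha c\leq 5+4c=4(c+5/4)$ and so $\sqrt{\alpha^2+4+4\alpha c}\leq 2\sqrt{c+5/4}$. Combining with $\alpha\leq 1$ yields
\[
	\tfrac{\alpha}{2}\left(\alpha+2c+\sqrt{\alpha^2+4+4\alpha c}\right)
	~\leq~\tfrac{\alpha}{2}\left(1+2c+2\sqrt{c+5/4}\right)
	~=~\alpha\left(c+\tfrac12+\sqrt{c+5/4}\right),
\]
which is exactly the claimed inequality. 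The only place requiring any care is the passage to the quadratic root bound together with the sign bookkeeping when squaring a nonnegative bound on $u$; there is no substantive obstacle, and the argument is essentially routine once the substitution is made.
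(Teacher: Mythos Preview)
Your proof is correct and follows essentially the same route as the paper: substitute $u=\sqrt{\beta+1}$ (the paper uses $x$), reduce the hypothesis to the quadratic inequality $u^2-\alpha u-(1+\alpha c)\leq 0$, bound $u$ by the larger root, square, and simplify using $\alpha\leq 1$. Your handling of the $\alpha\leq 0$ case is an extra bit of care the paper omits (for $\alpha<0$ the hypothesis is actually vacuous since the right-hand side is strictly negative, but your conclusion stands either way).
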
	

\begin{proof}
	Let $x = (\beta+1)^{1/2}$; note that $x$ is a monotonic function of $\beta$.  Then \eqref{eq:bgiven} becomes
	\[
		x^2 - 1 \leq \alpha (c+ x)  \quad\Rightarrow\quad x^2 - \alpha x - (\alpha c + 1) \leq 0.
	\]
	The polynomial on the left is strictly increasing when $x\geq \alpha/2$.  Since $\alpha\leq 1$ and $x\geq 1$ for $\beta \geq0$, it is strictly increasing over the entire domain of interest.  Thus
	\[
		x^2 - \alpha x - (\alpha c + 1) \leq 0 
		\quad\Rightarrow\quad
		x \leq \frac{\alpha + \sqrt{\alpha^2 + 4(\alpha c + 1)}}{2}.
	\]
	Substituting $(\beta+1)^{1/2}$ back in for $x$, this means
	\[
		\beta + 1 \leq \frac{\alpha^2}{4} + 
		\frac{\alpha\sqrt{\alpha^2+4(\alpha c + 1)}}{2} + 
		\frac{\alpha^2 + 4(\alpha c +1)}{4},
	\]
	and so
	\begin{align*}
		\beta &\leq \alpha\left(c + \frac{\alpha}{2} + \frac{\sqrt{\alpha^2+4(\alpha c + 1)}}{2}\right) \\
		&\leq \alpha\left(c + 1/2 + \sqrt{c + 5/4}\right)
	\end{align*}
	when $\alpha < 1$.
\end{proof}


\small
\bibliographystyle{plain}
\bibliography{multichannel}

\begin{thebibliography}{10}

\bibitem{baraniuk08si}
R.~G. Baraniuk, M.~Davenport, R.~De{V}ore, and M.~Wakin.
\newblock A simple proof of the restricted isometry property for random
  matrices.
\newblock {\em Constructive Approximation}, 28(3):253--263, 2008.

\bibitem{becker09ne}
S.~Becker, J.~Bobin, and E.~Cand\`es.
\newblock {NESTA}: A fast and accurate first-order method for sparse recovery.
\newblock {\em Submitted manuscript}, 2009.

\bibitem{blumensath08it}
T.~Blumensath and M.~Davies.
\newblock Iterative hard thresholding for compressed sensing.
\newblock {\em Submitted manuscript}, 2008.

\bibitem{candes08re}
E.~Cand\`es.
\newblock The restricted isometry property and its implications for compressed
  sensing.
\newblock {\em C. R. Acad. Sci. Paris, Ser. I}, 346:589--592, 2008.

\bibitem{candes06qu}
E.~Cand\`es and J.~Romberg.
\newblock Quantitative robust uncertainty principles and optimally sparse
  decompositions.
\newblock {\em Foundations of Comput. Math.}, 6(2):227--254, 2006.

\bibitem{candes06st}
E.~Cand\`es, J.~Romberg, and T.~Tao.
\newblock Stable signal recovery from incomplete and inaccurate measurements.
\newblock {\em Comm. on Pure and Applied Math.}, 59(8):1207--1223, 2006.

\bibitem{candes06ne}
E.~Cand\`es and T.~Tao.
\newblock Near-optimal signal recovery from random projections: {U}niversal
  encoding strategies?
\newblock {\em IEEE Trans. Inform. Theory}, 52(12):5406--5245, December 2006.

\bibitem{candes07da}
E.~Cand\`es and T.~Tao.
\newblock The {D}antzig selector: statistical estimation when $p$ is much
  smaller than $n$.
\newblock {\em Annals of Stat.}, 35(6):2313--2351, 2007.

\bibitem{catipovic90pe}
J.~Catipovic.
\newblock Performance limitations in underwater acoustic telemetry.
\newblock {\em IEEE J. Ocean Eng.}, 15:205--216, 1990.

\bibitem{claerbout:826}
Jon~F. Claerbout and Francis Muir.
\newblock Robust modeling with erratic data.
\newblock {\em Geophysics}, 38(5):826--844, 1973.

\bibitem{cotter02sp}
S.~F. Cotter and B.~D. Rao.
\newblock Sparse channel estimation via matching pursuit with application to
  equalization.
\newblock {\em IEEE Trans. Communications}, 50(3):374--377, 2002.

\bibitem{pena99de}
V.~H. de~la Pe\~na and E.~Gin\'e.
\newblock {\em Decoupling: From Dependence to Independence}.
\newblock Springer, 1999.

\bibitem{donoho06co}
D.~L. Donoho.
\newblock Compressed sensing.
\newblock {\em IEEE Trans. Inform. Theory}, 52(4):1289--1306, April 2006.

\bibitem{donoho01un}
D.~L. Donoho and X.~Huo.
\newblock Uncertainty principles and ideal atomic decomposition.
\newblock {\em IEEE Trans. Inform. Theory}, 47(7):2845--2862, 2001.

\bibitem{figueiredo07gr}
M.~A.~T. Figueiredo, R.~D. Nowak, and S.~J. Wright.
\newblock Gradient projection for sparse reconstruction: application to
  compressed sensing and other inverse problems.
\newblock {\em IEEE J. Selected Topics in Sig. Proc.}, 1(4):586--597, 2007.

\bibitem{fuchs99mu}
J.~J. Fuchs.
\newblock Multipath time-delay detection and estimation.
\newblock {\em IEEE Trans. Sig. Proc.}, 47:237--243, 1999.

\bibitem{gottesman89ne}
S.~R. Gottesman and E.~E. Fenimore.
\newblock New family of binary arrays for coded aperture imaging.
\newblock {\em Appl. Opt.}, 28, 1989.

\bibitem{gribonval03sp}
R.~Gribonval and M.~Nielsen.
\newblock Sparse representations in unions of bases.
\newblock {\em IEEE Trans. Inform. Theory}, 49:3320--3325, 2003.

\bibitem{hale08fi}
E.~Hale, W.~Yin, and Y.~Zhang.
\newblock Fixed-point continuation for $\ell_1$ minimization: Methodology and
  convergence.
\newblock {\em SIAM J. Optim.}, 19(3):1107--1130, 2008.

\bibitem{haupt08to}
J.~Haupt, W.~Bajwa, G.~Raz, and R.~Nowak.
\newblock Toeplitz compressed sensing matrices with applications to sparse
  channel estimation.
\newblock {\em Submitted to IEEE Trans. Inform. Theory}, August 2008.

\bibitem{herman09hi}
M.~A. Herman and T.~Strohmer.
\newblock High-resolution radar via compressed sensing.
\newblock {\em IEEE Trans. Sig. Proc.}, 57(6):2275--2284, June 2009.

\bibitem{herrmann:GJI}
F.~J. Herrmann and G.~Hennenfent.
\newblock Non-parametric seismic data recovery with curvelet frames.
\newblock {\em Geophysical Journal International}, 173:233–248, 2008.

\bibitem{ledoux91pr}
M.~Ledoux and M.~Talagrand.
\newblock {\em Probability in Banach Spaces}, volume~23 of {\em Ergebnisse der
  Mathematik und ihrer Grenzgegiete}.
\newblock Springer-Verlag, 1991.

\bibitem{marcia09co}
R.~F. Marcia, Z.~T. Harmany, and R.~M. Willett.
\newblock Compressive coded aperture imaging.
\newblock In {\em Proc. SPIE Conference on Computational Imaging VII}, pages
  72460G--1--13, January 2009.

\bibitem{marcia08su}
R.~F. Marcia, C.~Kim, C.~Eldeniz, J.~Kim, D.~J. Brady, and R.~M. Willett.
\newblock Superimposed video disambiguation for increased field of view.
\newblock {\em Opt. Express}, 16(21):16352--16363, 2008.

\bibitem{marcia08fa}
R.~F. Marcia, C.~Kim, J.~Kim, D.~Brday, and R.~M. Willett.
\newblock Fast disambiguation of superimposed images for increased field of
  view.
\newblock In {\em Proc. {IEEE} Int. Conf. Image Proc.}, pages 2620--2623, 2008.

\bibitem{needell09co}
D.~Needell and J.~Tropp.
\newblock {COSAMP}: Iterative signal recovery from incomplete and inaccurate
  measurements.
\newblock {\em Appl. and Comp. Harmonic Analysis}, 26(3):301--321, May 2009.

\bibitem{neelamani08ef}
R.~Neelamani, C.~E. Krohn, J.~R. Krebs, M.~Deffenbaugh, J.~E. Anderson, and
  J.~Romberg.
\newblock Efficient seismic forward modeling using simultaneous random sources
  and sparsity.
\newblock In {\em Proc. Soc. Explor. Geophysics}, Las Vegas, Nevada, November
  2008.

\bibitem{neelamani09ef}
R.~Neelamani, C.~E. Krohn, J.~R. Krebs, J.~Romberg, M.~Deffenbaugh, and J.~E.
  Anderson.
\newblock Efficient seismic forward modeling using simultaneous random sources
  and sparsity.
\newblock {\em Submitted to Geophysics}, December 2009.

\bibitem{rauhut09ci}
H.~Rauhut.
\newblock Circulant and {T}oeplitz matrices in compressed sensing.
\newblock In {R}{\'e}mi {G}ribonval, editor, {\em {SPARS}'09 - {S}ignal
  {P}rocessing with {A}daptive {S}parse {S}tructured {R}epresentations},
  {S}aint {M}alo {F}rance, 2009. {I}nria {R}ennes - {B}retagne {A}tlantique.

\bibitem{romberg09co}
J.~Romberg.
\newblock Compressive sensing by random convolution.
\newblock {\em SIAM J. Imaging Sci.}, 2(4):1098--1128, 2009.

\bibitem{romberg09mu}
J.~Romberg.
\newblock Multiple channel estimation using spectrally random probes.
\newblock In {\em Proc. SPIE Conference on Wavelets XIII}, volume 7446, pages
  744606--1--6, San Diego, CA, August 2009.

\bibitem{rudelson08sp}
M.~Rudelson and R.~Vershynin.
\newblock On sparse reconstruction from {F}ourier and {G}aussian measurements.
\newblock {\em Comm. on Pure and Applied Math.}, 61(8):1025--1045, 2008.

\bibitem{scales1990}
John~A Scales, P~Docherty, and A~Gersztenkorn.
\newblock Regularisation of nonlinear inverse problems: imaging the
  near-surface weathering layer.
\newblock {\em Inverse Problems}, 6:115--131, 1990.

\bibitem{talagrand05ge}
M.~Talagrand.
\newblock {\em The Generic Chaining: Upper and Lower Bounds on Stochastic
  Processes}.
\newblock Springer, 2005.

\bibitem{tropp08ra}
J.~A. Tropp.
\newblock On the conditioning of random subdictionaries.
\newblock {\em Appl. and Comp. Harmonic Analysis}, 25:1--24, 2008.

\bibitem{tropp10be}
J.~A. Tropp, J.~N. Laska, M.~F. Duarte, J.~Romberg, and R.~G. Baraniuk.
\newblock Beyond {N}yquist: efficient sampling of sparse bandlimited signals.
\newblock {\em IEEE Trans. Inform. Theory}, 56(1), January 2010.

\bibitem{tropp06fi}
J.~A. Tropp, M.~B. Wakin, M.~F. Duarte, D.~Baron, and R.~G. Baraniuk.
\newblock Random filters for compressive sampling and reconstruction.
\newblock In {\em Proc. {IEEE} Int. Conf. Acoust. Speech Sig. Proc.}, volume~3,
  pages III--872--875, Toulouse, France, May 2006.

\bibitem{yin08br}
W.~Yin, S.~Osher, J.~Darbon, and D.~Goldfarb.
\newblock Bregman iterative algorithms for compressed sesning and related
  problems.
\newblock {\em SIAM J. Imaging Sciences}, 1(1):143--168, 2008.

\end{thebibliography}

\end{document}